\newcommand{\Ltr}{\reflectbox{$\Gamma$}}
\newcommand{\ltr}{{\reflectbox{\hbox{$\scriptscriptstyle\Gamma$}}}}
\newcommand{\rtr}{{\scriptscriptstyle\Gamma}}
\newcommand{\ftr}{{\scriptscriptstyle{\mathrm T}}}
\newcommand{\ab}{\allowbreak}
\def\tr{\mathrm{tr}\,}
\def\Tr{\mathrm{Tr}\,}
\def\<{\langle}
\def\>{\rangle}
\newcommand{\cP}{\mathcal{P}}
\def\E{{\mathbb E}}
\renewcommand{\top}{{\scriptscriptstyle\mathrm{T}}}
\newcommand{\mb}{\hskip0.083333em}
\newcommand{\Wg}{\mathrm{Wg}}
\newcommand{\sN}{{\scriptscriptstyle N}}
\newcommand{\sR}{{\scriptscriptstyle R}}
\newcommand{\mybullet}{\vcenter{\hbox{$\scriptscriptstyle\bullet$}}}
\newcommand{\be}{\begin{equation}}
\newcommand{\ee}{\end{equation}}
      \newtheorem{theorem}{Theorem}[section]
       \newtheorem{corollary}[theorem]{Corollary}
       \newtheorem{lemma}[theorem]{Lemma}
\newtheorem{definition}[theorem]{Definition}
\theoremstyle{remark}
\newtheorem{remark}[theorem]{Remark}
\title[On partial transposes]
{On partial transposes of unitarily\\ invariant random matrices}
\author[Mingo]{James A.~Mingo$^{(*)}$}
\address{Department of Mathematics and Statistics, Jeffery
	Hall, Queen's University, King\-ston, Ontario, K7L 3Z1,
	Canada}
\author[Popa]{Mihai Popa$^{(**)}$}
\address{The University of Texas at San Antonio, Department
	of Mathematics, One UTSA Circle, San Antonio, Texas 78249,
	and \newline ${}\hspace{.35cm}{}$ Institute of Mathematics
	``Simion Stoilow'' of the Romanian Academy, P.O. Box
	1-764, Bucharest, RO-70700, Romania}
\begin{document}

	\begin{abstract}
 We compute the limit distribution of partial transposes (when both the number and the size of blocks tends to infinity) for a large class of ensembles of unitarily invariant random matrices. Furthermore, it is shown the asymptotic freeness relation between the ensembles of random matrices, their transposes and their left and right partial transposes.
	\end{abstract}

\subjclass[2020]{Primary 46L54; Secondary 60B20}
\keywords{Haar unitary matrix, partial transpose, freeness}
\maketitle

\section{Introduction}
{\let\thefootnote\relax%
\footnote{$^{(*)}$Research supported by a Discovery Grant from the
Natural Sciences and Engineering Research Council of
Canada, $^{(**)}$ The Simons Foundation grant No. 360242.}}
Given two positive integers 
$ b $, $d $ 
and a 
$ bd \times bd $
matrix 
$ A $, 
we define the 
$ (b, d)$-partial transpose of 
$ A $, denoted here by
$ A^{\Gamma (b, d)} $,
as follows.
We consider 
$ A $ 
as a 
$ b\times b $
matrix, with each entry a block of size
$ d \times d $;
then we transpose the entries of each block, while leaving the block in place.  The study of partial transposes is motivated by their relevance in Quantum Information Theory, where they are related to the concept of entanglement and to the ``distillability conjecture'' (see \cite{horo1}, \cite{horo2}, also \cite{aubrun}). Also relevant to the general theory and to the present paper is the family of states with a Positive Partial Transpose (PPT states); as stated in \cite{aubrun}, non-PPT states are necessarily entangled (see \cite{peres}) thus giving a simple test to detect entanglement.

 In \cite{aubrun} it was shown that the limit distribution of the 
 $ (d, d)$-partial transpose of a Wishart random matrix is a shifted semicircular, and supported on the positive half of the real axis when the shape parameter is larger than $4$. The result was extended in \cite{wishart} to more general partial transposes of Wishart random matrices. Furthermore, in \cite{mpsz1} it was shown that the limit distribution of the 
 $(b, d)$-partial transpose of a Haar unitary, when both $b $ and $ d $ tend to infinity, is circular. Here we shall show that these are particular cases of a more general result concerning partial transposes of unitarily invariant random matrices. More precisely, Theorem \ref{thm:limdistrib} states that if 
 $ (b_N)_N , (d_N)_N $ are two increasing unbounded sequences of positive integers and 
 $\big( A_{1, N}, \dots, A_{R, N} \big)_N $ 
 is an ensemble of  unitarily invariant $ R $-tuples of random matrices with the bounded cumulants property, and for which the  $\ast$-cumulants of rank one and two converge, each 
 $ A_{k, N} $
 of size 
 $ b_N \times d_N $,  
 then the sequence of  
$ (b_N, d_N)$-partial transposes of 
$ A_{k, N}$
has a limit $\ast$-distribution which is obtained by erasing all free $\ast$-cumulants of order higher than 2 and taking the limits for the free cumulants of lower order.  

In \cite{mingo-popa-transpose} it was shown that transposing a matrix can make it asymptotically freely independent from itself. Many more interesting examples were provided  by Male \cite{male}. Prior to \cite{mingo-popa-transpose}, all examples of asymptotic freeness required independence of the entries. This led us to consider the effect of partial transposes.  It is known (see \cite{arizmendi}, \cite{banica-nechita}, \cite{wishart}, \cite{wishart2}, \cite{mpsz1}) that partial transposes of Wishart and Haar unitary random matrices are, when the size of blocks tends to infinity, asymptotically freely independent from the initial matrix. In another direction, Yin and Zhao \cite{yz} showed the strong convergence of partial transposes. The main result of this paper (Theorem \ref{thm:free}) shows that asymptotic free independence holds for the more general setting of unitarily invariant ensembles of random matrices with the bounded cumulants property.  

Besides the Introduction, the paper is organized in 3 more sections. Section 2 (Preliminaries) reviews some results from \cite{collins-sniady} (basics of unitary Weingarten calculus) and \cite{mpsz1} (asymptotic behavior of permuted Haar unitaries). Section 3 (Notations and Auxiliary Lemmata) presents several new notations and a suite of several technical results. Finally, Section 4 (Main Results) contains the main results of the paper, Theorems \ref{thm:limdistrib} and \ref{thm:free} mentioned above.

\section{Preliminaries}

\subsection{Review of the Unitary Weingarten Calculus}\label{weingarten}
One of the main results in \cite{collins-sniady} is the following. If 
$ U $ 
is a 
$ M \times M $
Haar unitary matrix, and 
$ u_{i, j} $ 
is its 
$ (i, j) $-entry, then
	\begin{align}\label{eqn:Weingarten}
	\E\left(
	u_{i_1,j_1}u_{i_2,j_2}
	\cdots u_{i_m,j_m}\overline{u_{i^\prime_1,j^\prime_1}}\, 
	\cdots\overline{u_{i^\prime_m,j^\prime_m}}\right)
	=
	\sum_{\sigma,\tau\in S_{m}}\Big(\prod_{k=1}^n\delta^{i_k}_{i^\prime_{\sigma(k)}}\delta^{j_k}_{j^\prime_{\tau(k)}}\Big) 
	\mathrm{Wg}_M (\sigma^{-1}\tau),
	\end{align}
where
$ \mathrm{Wg}_M $
is the unitary Weingarten function.
Furthermore, for
$\sigma\in S(m) $ 
with cycle decomposition 
$\sigma=\mathcal{C}_1\cdots \mathcal{C}_{\#( \sigma)}$,
and
$ Cat_r $ 
denoting the $r $-th Catalan number, denoting
\begin{align*}
C(\sigma)= \prod_{i=1}^{\#( \sigma)} (-1)^{|\mathcal{C}_i|-1} Cat_{|\mathcal{C}_i|-1}
\end{align*}
we have that
	\begin{align}\label{eqn:Wg:O}
		\mathrm{Wg}_M ( \sigma )=
	M^{-2m+\#(\sigma)}\cdot C(\sigma)+O(M^{-2m+\#(\sigma) -2}).
	\end{align}

 As described in \cite{mpsz1}, the equation above can be easily re-written for the framework when factors of the type 
 $ u_{i_s j_s} $ 
 and 
 $ \overline{u_{i_s, j_s}} $ 
 are not segregated in the product. 
For 
$ n $ 
a positive integer, we shall denote by
$ [ n ] $ 
the ordered set 
$ \{ 1, 2, \dots, n \}$.
and denote by
$ \cP_2( 2m) $
the set of pair-partitions on 
$ [ 2m ]$.
Given a map 
$ \varepsilon : [2 m ] \rightarrow \{ 1, \ast \} $,
we shall write 
$ \varepsilon_j $ 
for
$ \varepsilon(j) $
and we shall denote by 
$\cP_2^{\varepsilon} (2m) $ 
the set (possibly void) of pair-partitions on
$ [2 m ] $
that connect elements with different values of 
$\varepsilon$:
\[ \cP_2^{\varepsilon} (2m)  = \{ p \in \cP_2(2m):\ \varepsilon_{k} \neq \varepsilon_{p(k)} \textrm{ for all } k \in [ 2m ] \}. \]
With the notations above, Equation (\ref{eqn:Weingarten}) becomes
\begin{align}\label{eqn:Wg:2}
\E \big( 
u_{i_1,j_1}^{\epsilon_{1}}
u_{i_2,j_2}^{\epsilon_{2}}
\cdots u_{i_{2m},j_{2m}}^{\epsilon_{2m}}
\big)
=
\sum_{p,q\in \mathcal{P}^\epsilon_{2}(2m)}
\bigg(
\prod_{k=1}^{2m}\delta^{i_k}_{i_{p(k)}}\delta^{j_k}_{j_{q(k)}}
\bigg)
\mathrm{Wg}_M( p , q ),
\end{align}
where the function
$ \Wg_M ({}\cdot{},{}\cdot{}) $ 
is obtained from the unitary Weingarten function as follows. For
$ p, q \in \cP_2^\varepsilon ( 2m ) $, 
denote by
$ p \vee q $ 
the supremum of 
$ p $
and 
$ q $
in the lattice of the partitions of 
$ [ 2m] $ 
(see \cite{nica-speicher} for more details).
More precisely, if  
$ p \vee q $ 
has the block decomposition
$  B_1, B_2, \cdots, B_{ | p \vee q |} $,
then 
each of the blocks 
has an even number of elements
$ \{ a_1, a_2, \dots, a_{2s}\}$
with
$ 	a_2=p(a_1),\,a_3=q(a_2),\,a_4=p(a_2),\ldots,a_1=q(a_{2s}) $.
Finally, we define 
 $ \mathrm{Wg}_M (p, q) = \mathrm{Wg}_M ( p \vee q ) $,
  where the partition $p\vee q $ is identified with the permutation with cycles
 	 $B_1, B_2, \cdots B_{|p \vee q |} $,
 	  as described above.
  In particular, Equation (\ref{eqn:Wg:2}) gives that
  \begin{align}\label{w-mult}
\mathrm{Wg}_M(p,q)=M^{-2m+|p \vee q |}
C(p \vee q)
+
O(M^{-2m+|p \vee q |-2}).
\end{align}

\subsection{Finding the leading order}\label{prop:3:1}
In this subsection we will recall a technical result from \cite{mpsz1} that shall be used for second part of the paper. 
 
 First, we need to introduce several notations:
 \begin{enumerate}
 	\item[$\mybullet$] for
 $ \sigma $ 
a permutation on the the set 
$ [ M ]^2 = \{ (i, j):\ 1 \leq i, j \leq M \} $,  
and 
$ A $ 
 a 
  $ M \times M $
 matrix, we define 
 $ A^\sigma $
 as the 
 $ M \times M $
 matrix whose 
 $ (i, j)$-entry is the $ \sigma(i,j)$ -entry of $ A $, 
 i.e. 
 $ [ A^\sigma ]_{i, j} = [A ]_{\sigma(i, j)} $;

 \item[$\mybullet$] for
  $ \varepsilon \in \{ 1, \ast \} $
   and 
 $ a $ 
 a complex random variable, and
 $ (i, j) \in [ M]^2 $, ,
  we denote
  \begin{align*}
  a^{(\varepsilon)}=\begin{cases}
  a \mbox{ if } \varepsilon =1,\\
  \overline{a} \mbox{ if } \varepsilon =\ast\\
  \end{cases}
  \hspace{1cm} \textrm{  and } \hspace{1cm}
  \varepsilon (i,j)=\begin{cases}
  (i,j) \mbox{ if } \varepsilon =1,\\
  (j,i) \mbox{ if } \varepsilon =\ast\\
  \end{cases};
  \end{align*}

\item[$\mybullet$]
for a $M \times M$ matrix $A$ we let $\Tr(A)$ be the sum of the diagonal entries of $A$, the trace of $A$, and $\tr(A) = M^{-1} \Tr(A)$, the \textit{normalized trace} of $A$.

 \end{enumerate}

 For
$ \sigma: [ 2m] \rightarrow \mathcal{S}([ M]^2) $ 
and
$ \varepsilon: [ 2m] \rightarrow \{1, \ast \} $,
we shall write
$ \sigma_j $,
respectively
$ \varepsilon_j $
for 
$ \sigma(j) $
and 
$ \varepsilon(j) $.
 With the notations from \ref{weingarten} we have that
 \begin{multline*}
 \E\big(\tr \big(
  (U^{ \sigma_1})^{\varepsilon_1} 
  (U^{ \sigma_2})^{\varepsilon_2}
  \cdots
 (U^{ \sigma_{2m}})^{\varepsilon_{2m}} 
   \big)\big)\\
    = 
   {M^{-1}} \kern-1em\sum_{ i_1, \dots, i_{2m}=1}^N \kern-1em 
   \E \big(
u^{( \varepsilon_1)}_{ \sigma_1 \circ \varepsilon_1 ( i_1, i_2 ) }
u^{( \varepsilon_2)}_{ \sigma_2 \circ \varepsilon_2 ( i_2, i_3 ) }
\cdots
u^{( \varepsilon_{2m})}_{ \sigma_{2m} \circ \varepsilon_{2m} ( i_{2m}, i_1 ) }
   \big)
   =  
   {M^{-1}} \kern-1em\sum_{ i_1, \dots, i_{2m}=1}^M \kern-1em
 \E \big(  u^{(\varepsilon_1)}_{ k_1, l_1} \cdots 
 u^{(\varepsilon_{2m})}_{ k_{2m}, l_{2m}} \big)
 \end{multline*}
where for each
$ s$
 we define $k_s$ and $l_s$ by the equation
$ (k_s, l_s) = \sigma_s (\varepsilon_s (i_s, i_{s+1}) )
$ , 
 with the identification
$ i_{2m+1} = i_1 $. 

Thus, Equation (\ref{eqn:Wg:2}) gives that 
\begin{align}
 \E \circ \tr \big(
(U^{ \sigma_1})^{\varepsilon_1} 
(U^{ \sigma_2})^{\varepsilon_2}
\cdots
(U^{ \sigma_{2m}})^{\varepsilon_{2m}} 
\big)
= \sum_{p, q \in \cP_2^{\varepsilon}(2m) } \mathcal{V} (\sigma, \varepsilon, M, p, q)
\end{align}
 where
\begin{align}\label{v:pq}
 \mathcal{V} (\sigma, \varepsilon, M, p, q)
 & =
 \frac{1}{M}
 \mathrm{Wg}_M( p , q )
  \sum_{i_1, \dots, i_{2m}=1}^M
 \sum_{j_1, \dots, j_{2m}=1}^M 
   \prod_{s=1}^{2m}
    \delta_{k_s}^{k_{p(s)}} \delta_{l_s}^{l_{q(s)}}\delta_{j_s}^{i_{s+1}}\\
 & =
 \frac{1}{M}
 \mathrm{Wg}_M( p , q ) \cdot 
 | \mathcal{A}^{(p, q)}_{\sigma, \varepsilon, M}|
\nonumber
\end{align}
where
$ (k_s, l_s)$ are as above.
Let 
 $ \mathcal{A}^{(p, q)}_{\sigma, \varepsilon, M} $
be the set 
 \begin{align*}
 \mathcal{A}^{(p, q)}_{\sigma, \varepsilon, M}=
\big\{ 
(i_1, j_1, \dots, i_{2m}, j_{2m}) \in [ M]^{4m}\mid \ j_s= i_{s+1},
  k_s = k_{p(s)},
l_s = l_{q(s)} \textrm{ for all } s \in [ 2m ]
\big\}.
 \end{align*}
 
 A key result obtained in \cite{mpsz1} (Proposition 3.1) is the following consequence of Equations (\ref{eqn:Weingarten}) and (\ref{w-mult}).
\begin{lemma}\label{lemma:prop:3:1}
	With the notations from above, we have that 
$\mathcal{V}( \sigma, \varepsilon , M, p, q)
= O(M^{-1}) $
unless the following conditions are satisfied, case in which 
$\mathcal{V}( \sigma, \varepsilon, M, p, q)
= O(M^{0}) $:
\begin{enumerate}
	\item[$(i)$] $ p\vee q $ is non-crossing.
	\item[$(ii)$] for any block 
	$\{ a_1, \dots, a_r \} $ 
	of 
	$ p \vee q $ 
	with 
	$ a_1 < a_2 < \dots < a_r $ and under the identifications 
	$ a_{r+1} = a_1 $ and $ a_0 = a_r $
	 we have that, for each 
	$ s =1, \dots, r$,
	\begin{align}\label{as}
	\{ p(a_s), q(a_s)\} = \{ a_{s-1}, a_{s+1}\}.
	\end{align}
$($in particular, if $(\ref{as})$ holds true, then \marginpar{parts}
$ \varepsilon_{s} \neq \varepsilon_{s+1} $
for all 
$ s =1, \dots, r $.$)$
\end{enumerate}	
\end{lemma}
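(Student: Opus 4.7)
The plan is to combine the Weingarten asymptotic (\ref{w-mult}) with a combinatorial count of $|\mathcal{A}^{(p,q)}_{\sigma,\varepsilon,M}|$. Substituting the leading-term expansion of $\Wg_M(p,q)$ into (\ref{v:pq}) yields
\[ \mathcal{V}(\sigma,\varepsilon,M,p,q)=\frac{C(p\vee q)}{M^{2m-|p\vee q|+2}}\,\big|\mathcal{A}^{(p,q)}_{\sigma,\varepsilon,M}\big|+O\!\left(\frac{|\mathcal{A}^{(p,q)}_{\sigma,\varepsilon,M}|}{M^{2m-|p\vee q|+4}}\right), \]
so the statement reduces to proving the universal bound $|\mathcal{A}^{(p,q)}_{\sigma,\varepsilon,M}|=O(M^{2m-|p\vee q|+2})$, together with the sharper bound $|\mathcal{A}^{(p,q)}_{\sigma,\varepsilon,M}|=O(M^{2m-|p\vee q|+1})$ in the absence of either $(i)$ or $(ii)$.

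For the counting step, the relations $j_s=i_{s+1}$ absorb the $j$'s into the $i$'s, leaving $2m$ parameters $i_1,\ldots,i_{2m}\in[M]$. Since each $\sigma_s$ is a bijection of $[M]^2$, the constraints $k_s=k_{p(s)}$ and $l_s=l_{q(s)}$ can be rewritten as equations identifying specific coordinates among the tuples $(i_s,i_{s+1})$, $(i_{p(s)},i_{p(s)+1})$, and $(i_{q(s)},i_{q(s)+1})$. I would encode these identifications as a graph $G=G(\sigma,\varepsilon,p,q)$ on the vertex set $[2m]$ and bound $|\mathcal{A}^{(p,q)}_{\sigma,\varepsilon,M}|\le M^{c(G)}$, with $c(G)$ the number of its connected components. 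A genus-type argument applied to the ribbon surface built by gluing the cycle $\gamma=(1\,2\,\cdots\,2m)$ with the pair partitions $p$ and $q$ then yields $c(G)\le 2m-|p\vee q|+2$, with equality corresponding to planarity of the surface---i.e.\ to the non-crossing condition $(i)$---together with the local block-alternation condition (\ref{as}). The parenthetical consequence $\varepsilon_s\ne\varepsilon_{s+1}$ follows at once from $p,q\in\cP_2^\varepsilon(2m)$.

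The main obstacle I expect is this last combinatorial step: showing that saturation of $c(G)=2m-|p\vee q|+2$ forces precisely $(i)$ and (\ref{as}), and nothing weaker. Concretely, one must argue block-by-block in $p\vee q$ that any violation of (\ref{as})---some $\{p(a_s),q(a_s)\}$ failing to match $\{a_{s-1},a_{s+1}\}$---either creates a crossing or merges two components of $G$, in either case dropping $c(G)$ by at least one. The Weingarten asymptotics themselves are mechanical from (\ref{w-mult}); the substance of the lemma, as in Proposition~3.1 of \cite{mpsz1}, lies in this local block analysis.
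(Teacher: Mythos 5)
First, a point of comparison: the paper does not prove this lemma at all --- it is imported verbatim as Proposition~3.1 of \cite{mpsz1}, so there is no in-paper argument to measure yours against. Your opening reduction is nonetheless the right one and is surely the skeleton of the proof in \cite{mpsz1}: by (\ref{w-mult}) the Weingarten factor is mechanical, and everything rests on bounding $|\mathcal{A}^{(p,q)}_{\sigma,\varepsilon,M}|$. (One bookkeeping caveat: the display (\ref{v:pq}) is internally inconsistent --- the middle expression equals $\tfrac1M\Wg_M(p,q)\,|\mathcal{A}^{(p,q)}_{\sigma,\varepsilon,M}|$, with a single factor of $M^{-1}$, as the sanity check $m=1$, $\sigma=\mathrm{id}$, $\E\circ\tr(UU^*)=1$ confirms; so the correct thresholds are $|\mathcal{A}^{(p,q)}_{\sigma,\varepsilon,M}|=O(M^{2m-|p\vee q|+1})$ in general and $O(M^{2m-|p\vee q|})$ when $(i)$ or $(ii)$ fails, one less than the exponents you wrote.)

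The genuine gap is in the combinatorial core, and it is twofold. First, your encoding of the constraints $k_s=k_{p(s)}$, $l_s=l_{q(s)}$ as ``equations identifying specific coordinates among the tuples $(i_s,i_{s+1})$'' is not available for general $\sigma_s\in\mathcal{S}([M]^2)$: an arbitrary bijection of $[M]^2$ does not act coordinate-wise, so each constraint is a joint condition on two pairs rather than an identification of individual indices, and the graph $G$ --- hence the bound $|\mathcal{A}^{(p,q)}_{\sigma,\varepsilon,M}|\le M^{c(G)}$ and the ribbon surface glued from $\gamma$, $p$, $q$ --- is simply not defined in the generality in which the lemma is stated. (It does become available for the $\sigma$'s actually used later in this paper, which take values in $\{\mathit{id},\top\}$, but the lemma as quoted is for arbitrary $\sigma$, and any correct count must work at the level of the pairs $(k_s,l_s)$ transported through $\sigma_s^{-1}$ and the consistency conditions $j_s=i_{s+1}$.) Second, the step you flag as ``the main obstacle'' --- that saturation of the count forces exactly $(i)$ and (\ref{as}) and nothing weaker --- is not a difficulty you have reduced the lemma to; it \emph{is} the lemma, restated in the language of $c(G)$. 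Ending with ``a genus-type argument then yields'' this dichotomy, without exhibiting the block-by-block induction (locate an interval block of $p\vee q$, show that any violation of (\ref{as}) on it costs at least one factor of $M$, remove the block and recurse --- compare the way Lemmata \ref{lemma:prop:3:1} and \ref{lemma:segment} are actually deployed in the proofs of Corollary \ref{cor:2:3} and Lemma \ref{lemma:2:4}), supplies a plausible table of contents for a proof rather than the proof itself.
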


Will shall briefly utilize another technical result from \cite{mpsz1}, stated below (a particular case of Lemma 3.2 $(iv)$):
\begin{lemma}\label{lemma:segment}
 Suppose that 
$ p\vee q $ has a block,
$ B$,
 of the form 
$(s+1, s+2, \dots, s+t) $
and denote by
$ \iota_B $
the map 
$ (i_1, j_1, \dots, i_{2m}, j_{2m}) 
\mapsto
(i_{s+1}, j_{s+1}, \dots, i_{s+t}, j_{s+t})$.

If 
$ \big| \iota_B ( \mathcal{A}^{(p, q)}_{\sigma, \varepsilon, M})\big|
= o(M^0)$
then 
$\mathcal{V}( \sigma, \varepsilon, M, p, q)
= O(M^{0})$.
\end{lemma}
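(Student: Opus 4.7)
My approach is to exploit the fact that $B$ being a block of $p\vee q$ decouples the combinatorial constraints attached to indices labeled by $B$ from those labeled by $B^c$, up to two boundary transit identifications. Concretely, for $a\in B$ we have $p(a),q(a)\in B$, so the equalities $k_a=k_{p(a)}$ and $l_a=l_{q(a)}$ involve only coordinates with labels in $B$; the only interaction between $B$ and $B^c$ comes through the cyclic constraints $j_a=i_{a+1}$ evaluated at the two endpoints of the segment, i.e.\ $j_s=i_{s+1}$ and $j_{s+t}=i_{s+t+1}$.

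With this decoupling, I would first establish a factorized bound of the form
\[
\bigl|\mathcal{A}^{(p,q)}_{\sigma,\varepsilon,M}\bigr|\;\leq\;
\bigl|\iota_B(\mathcal{A}^{(p,q)}_{\sigma,\varepsilon,M})\bigr|\cdot\bigl|\iota_{B^c}(\mathcal{A}^{(p,q)}_{\sigma,\varepsilon,M})\bigr|,
\]
where the second factor counts admissible outside tuples subject to the constraints inherited from $p,q$ restricted to $B^c$ together with the remaining transit identifications. Removing $B$ collapses the $2m$-cycle of transits into a $(2m-t)$-cycle, and the induced counting problem on $B^c$ is of exactly the same shape as the one treated in Lemma \ref{lemma:prop:3:1}.

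The second step is to apply the estimates underlying Lemma \ref{lemma:prop:3:1} to this reduced problem, obtaining a generic bound on $|\iota_{B^c}(\mathcal{A}^{(p,q)}_{\sigma,\varepsilon,M})|$ by the appropriate power of $M$ which, together with the Weingarten asymptotics $\mathrm{Wg}_M(p,q)=O(M^{-2m+|p\vee q|})$ from (\ref{w-mult}) and the $M^{-2}$ prefactor from (\ref{v:pq}), absorbs everything except the factor corresponding to $B$:
\[
\mathcal{V}(\sigma,\varepsilon,M,p,q)\;=\;O\!\Bigl(\bigl|\iota_B(\mathcal{A}^{(p,q)}_{\sigma,\varepsilon,M})\bigr|\Bigr).
\]
The hypothesis on $|\iota_B(\mathcal{A}^{(p,q)}_{\sigma,\varepsilon,M})|$ then yields the stated order.

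The main technical obstacle will be the bookkeeping in the first step: one must verify that after fixing the two boundary values $i_{s+1}$ and $i_{s+t+1}$ (and consequently $j_s$ and $j_{s+t}$), the coordinates inside $B$ and outside $B$ can be enumerated independently without double-counting, and that the cycle contraction on $B^c$ produces an instance genuinely governed by the worst-case bound of Lemma \ref{lemma:prop:3:1}. Once these two points are settled, the conclusion follows by direct substitution of the three asymptotic estimates into (\ref{v:pq}).
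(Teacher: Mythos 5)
First, be aware that the paper does not actually prove Lemma \ref{lemma:segment}; it is quoted from \cite{mpsz1} (as a special case of Lemma 3.2(ii) there), so there is no internal proof to compare yours against and your argument must stand entirely on its own.

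As written it does not stand. Your decoupling observation is correct: since $B$ is a block of $p\vee q$ we have $p(a),q(a)\in B$ for $a\in B$, the constraints $k_a=k_{p(a)}$, $l_a=l_{q(a)}$ split between $B$ and $B^c$, the only interaction is through the two boundary transits, and the inequality $|\mathcal{A}^{(p,q)}_{\sigma,\varepsilon,M}|\le|\iota_B(\mathcal{A}^{(p,q)}_{\sigma,\varepsilon,M})|\cdot|\iota_{B^c}(\mathcal{A}^{(p,q)}_{\sigma,\varepsilon,M})|$ is immediate. But the entire content of the lemma sits in the step you defer as ``the main technical obstacle'': bounding $|\iota_{B^c}(\mathcal{A}^{(p,q)}_{\sigma,\varepsilon,M})|$ for the contracted cycle is exactly as hard as the estimates behind Lemma \ref{lemma:prop:3:1} (which this paper also only quotes), and your asserted outcome $\mathcal{V}(\sigma,\varepsilon,M,p,q)=O(|\iota_B(\mathcal{A}^{(p,q)}_{\sigma,\varepsilon,M})|)$ is not correctly calibrated. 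The factor $\mathrm{Wg}_M(p,q)=O(M^{-2m+|p\vee q|})$ from (\ref{w-mult}) is tuned to the full $2m$-point cycle, whereas the contracted problem on $B^c$ has $2m-t$ points and $|p\vee q|-1$ blocks; even the optimal bound on $|\iota_{B^c}(\mathcal{A}^{(p,q)}_{\sigma,\varepsilon,M})|$ therefore leaves a residual power of $M$ of order $t-1$ that must be weighed against the normalization of $|\iota_B(\mathcal{A}^{(p,q)}_{\sigma,\varepsilon,M})|$, and your argument never accounts for it. A concrete symptom: in the proof of Lemma \ref{lemma:2:4} the present lemma is invoked with $|\iota_B(\mathcal{A}^{(p,q)}_{\mathrm{id},\theta,b})|\le b^3$ for a block of size $4$, so your final bound would only give $\mathcal{V}=O(b^3)$, which is useless there. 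You should also have flagged that the statement as printed cannot be the intended one: a cardinality that is $o(M^0)$ is eventually $0$, which would make $\mathcal{A}^{(p,q)}_{\sigma,\varepsilon,M}$ empty and the lemma vacuous, while the stated conclusion $O(M^0)$ already holds unconditionally by Lemma \ref{lemma:prop:3:1}. The exponents in both hypothesis and conclusion need to be corrected (against \cite{mpsz1}) before the bookkeeping in your plan can be made to close.
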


 \section{Auxiliary lemmata}\label{aux}
 
 Let $ b $ and $ d $ be two positive integers and let
 $  M = b \cdot d $.
 We define 
 $(b, d)$ partial transpose 
 $ \Gamma_{b, d} $
 as follows (see also the intuitive definition from the Introduction). If we consider the bijection
 $\psi : [b]\times [d] \rightarrow [M] $ 
 given by
 $ \psi(\alpha, \beta) = (\alpha -1)d + \beta $,
 then the map 
  $ \Gamma_{b, d} $ 
  is the permutation of $ [ M]^2 $
  given by (here 
  $ \alpha_1, \alpha_2 \in [ b] $ 
  and
  $ \beta_1, \beta_2 \in [ d ] $):
 \[ 
  \Gamma_{b, d}\big( \psi(\alpha_1, \beta_1), \psi(\alpha_2, \beta_2)\big)
  = \big( \psi( \alpha_1, \beta_2), \psi(\alpha_2, \beta_1) \big).
  \]
  The map 
  $ \Ltr_{b, d} $ 
(``the left $ (b, d)$- partial transpose'') is given by
$ \Ltr_{b, d}= \mathrm{T} \circ \Gamma_{b, d} $, where T denotes the transpose.
 Since all the partial transposes in this section will be of the same type, we will not use the notations 
 $ \Gamma_{b, d} $ 
 and 
 $ \Ltr_{b, d}$,
 but the shorter 
 $ \Gamma $ and $ \Ltr $.
 
 For $ A $ a matrix of size 
 $ M \times M$ 
 again the simplify the notations, we shall use the, as in \cite{wishart}, the following  notation (here
 $ \theta, \eta \in \{ -1, 1\}$):
 \begin{align}\label{notation:2-1}
 A^{ (\theta, \eta)} = \left\{ 
 \begin{array}{ll}
 A & \textrm{ if } (\theta, \eta) = (1, 1)\\
 A^{\ltr} & \textrm{ if } (\theta, \eta)= ( -1, 1)\\
 A^\rtr & \textrm{ if } (\theta, \eta) = ( 1, -1)\\
 A^\ftr & \textrm{ if } (\theta, \eta)= ( -1, -1).
 \end{array}
 \right.
 \end{align} 
 We shall also use the shorthand notation
$ [ A ]_{\alpha_i, \beta_i, \alpha_{j}, \beta_{j}} $
for the
$( \psi(\alpha_i, \beta_i), \psi(\alpha_{j}, \beta_{j}))$-entry of the matrix 
$ A $, 
i.e. for
$ [ A]_{\psi(\alpha_i, \beta_i), \psi(\alpha_{j}, \beta_{j})}$. 

Given a map 
$ \varphi : [m] \rightarrow \{ 1, -1\} $ 
we will denote by 
$ \widehat{\varphi}$ 
the permutation from 
$ \mathcal{S}(2m) $
given by
 \begin{align*}
\widehat{\varphi}(2s-1)= \left\{
\begin{array}{ll}
2s-1 & \textrm{ if } \varphi(s) = 1\\
2s & \textrm{ if } \varphi(s) = -1,
\end{array} \right.
\textrm{ respectively }
\widehat{\varphi}(2s)= \left\{
\begin{array}{ll}
2s & \textrm{ if } \varphi(s) = 1\\
2s-1 & \textrm{ if } \varphi(s) = -1.
\end{array} \right. .
\end{align*}

 Assume that 
 $ m $ 
 is a positive integer, 
 $ A $, as above, is  a $ M \times M $ random  matrix, and 
 $ \theta, \eta: [m]\rightarrow \{1, -1\} $.
  For each
 $ s \in [ m ] $,
 with the notations above
 we have that
 \begin{align*}
 [A^{(\theta(s), \eta(s))}]_{
  \alpha_{2s-1}, \beta_{2s-1}, \alpha_{2s}, \beta_{2s}} = 
[A]_{\alpha_{\widehat{\theta}( 2s-1)}, 
	\beta_{ \widehat{\eta}(2s-1)}, 
	\alpha_{\widehat{\theta}(2s)},
	\beta_{\widehat{\eta}(2s)}}.
 \end{align*}
 
 In particular, if 
 $ U $
 is a unitary Haar random matrix of same size as A,
 we have that
 \begin{align}\label{entry}
 [(UAU^\ast)^{(\theta(s), \eta(s) )}&]_{
 	 \alpha_{2s-1}, \beta_{2s-1}, \alpha_{2s}, \beta_{2s}}\\
 & \ \  \ \ \ = \sum_{k, k^\prime =1}^M 
 [U]_{ \psi (\alpha_{\widehat{\theta}(2s-1)}, \beta_{\widehat{\eta}(2s-1) } ), k }
 [ A]_{k, k^\prime}
 \overline{[U]_{ \psi (\alpha_{\widehat{\theta}(2s)}, \beta_{\widehat{\eta}(2s) } ), k^\prime } }.\nonumber
 \end{align}

\begin{definition}
 A $ m $-tuple 
 $\big( A_1, A_2, \dots, A_m)$
 of $ M \times M $
 random matrices is said to be 
 \emph{unitarily invariant} if for any
 $ M \times M $ 
 unitary matrix 
 $ U $, 
 any $ r_1, \dots, r_k \in [m]$,
 and any $i_1, j_1, \dots, i_s, j_s \in [ M ]$
  we have that
  \[ \E (a_{i_1, j_1}^{(r_1)} a_{i_2, j_2}^{(r_2)}\cdots a_{i_s j_s}^{(r_s)} ) 
  = \E (b_{i_1, j_1}^{(r_1)}b_{i_2, j_2}^{(r_2)} \cdots b_{i_s j_s}^{(r_s)})
  \]
  where
  $a_{i,j}^{(k)} $
  is the 
  $(i, j)$-entry of 
  $ A_k $
  and 
  $ b_{i, j}^{(k)} $
  is the 
  $ (i, j)$-entry of 
  $ UA_kU^\ast$.
\end{definition} 

 Let 
 $ \big(T_1, T_2, \dots, T_m ) $ 
 be a unitarily invariant $m$-tuple of
  $M \times M $
   random matrices (where 
   $ M = b \cdot d $)
    and let 
 $ U $ 
 be a 
 $ M \times M $
 Haar unitary random matrix with entries independent from the entries of the $m$-tuple 
 $ \big(T_1, T_2, \dots, T_m ) $. 
 With the notations from above we have that
 \begin{multline*}
\E \big(\tr \big( 
 T_1^{( \theta(1), \eta(1))}
\cdot
T_2^{( \theta(2), \eta(2))}
\cdots 
( T_m^{( \theta(m), \eta(m))}
\big)\big)\\
= 
\E \big(\tr \big( \,
( U T_1 U^\ast)^{( \theta(1), \eta(1))}
\cdot
( U T_2 U^\ast)^{( \theta(2), \eta(2))}
\cdots
( U T_m U^\ast)^{( \theta(m), \eta(m))} 
\big)\big)\\
 =  \frac{1}{M}
\sum_{\substack{ \vec{\alpha}\in [b]^{2m} \\
		\vec{\beta}\in [d]^{2m} \\
		\vec{\gamma}\in [M]^{2m} } }
\E \Big( \prod_{s=1}^m
[ U]_{
	\psi(\alpha_{\widehat{\theta}(2s-1)}, 
	\beta_{\widehat{\eta}(2s-1) }), \gamma_{2s-1}}
[ T_s]_{ \gamma_{2s-1}, \gamma_{2s}} 
\overline{ [ U]_{\psi( \alpha_{\widehat{\theta}(2s) },
		\beta_{ \widehat{\eta}(2s)}), \gamma_{2s}}
}
\delta^{\alpha_{2s}}_{\alpha_{2s+1}}
\delta^{\beta_{2s}}_{\beta_{2s+1}}
\Big),
\end{multline*} 
  where
 $ \vec{\alpha} =(\alpha_1, \dots, \alpha_{2m})$, 
 $\vec{\beta} =(\beta_1, \dots, \beta_{2m})$
 and
 $\vec{\gamma} = (\gamma_1, \gamma_2, \dots, \gamma_{2m})$.
 
  When
 $ \varepsilon: [2m] \rightarrow \{ 1, \ast \}$
 via
 $ \displaystyle
 \varepsilon(s) = \left\{ 
 \begin{array}{ll}
 1 & \textrm{ if $ s $ is odd} \\
 \ast & \textrm{ if $ s $ is even} 
 \end{array}
 \right.
 $, 
 Equation  (\ref{eqn:Wg:2}) gives that
 \begin{multline*}
 \E \big(\tr\big( 
 T_1^{( \theta(1), \eta(1))}
 \cdot
T_2^{( \theta(2), \eta(2))}
 \cdots 
T_m^{( \theta(m), \eta(m))}   
 \big)\big)\\
 =  \frac{1}{M}
 \sum_{\substack{ \vec{\alpha}\in [b]^{2m} \\
 		\vec{\beta}\in [d]^{2m} \\
 		\vec{\gamma}\in [M]^{2m} } }
 \sum_{p, q \in \cP_2^{\varepsilon}(2m) } 
 \mathrm{Wg}_M (p, q ) 	
 \E \Big( 
 \prod_{s =1}^m 
 [ T_s]_{ \gamma_{2s-1}, \gamma_{2s}} \Big)
 \delta^{\gamma_{2s}}_{\gamma_{ q(2s)}}
 \delta^{
 	\psi(\alpha_{\widehat{\theta}(2s)}, \beta_{\widehat{\eta}(2s) })
 }_{
 	\psi(\alpha_{\widehat{\theta}\circ p(2s)}, 
 	\beta_{\widehat{\eta}\circ p(2s)})
 } 
 \delta^{\alpha_{2s}}_{\alpha_{2s+1}}
 \delta^{\beta_{2s}}_{\beta_{2s+1}}
.
 \end{multline*} 
 
In order to simply the writing in the rest of this section, we shall recall some notation from \cite{mingo-popa-transpose}. 
Assume that 
$ A_1, A_2, \dots, A_m $
are 
$ M \times M $ 
matrices.
If
$ \gamma $
is a permutation on the set 
$ [ m ] $
with cycle decomposition
$ \sigma = \mathcal{C}_1 \cdot \mathcal{C}_2 \cdots\mathcal{C}_r $
with 
$ \mathcal{C}_k = ( i_{(k, 1)}, i_{(k, 2)}, \dots, i_{(k, l(k))} )$,
then we let
\begin{align*}
\Tr_{\sigma} ( A_1, A_2, \dots, A_m )&  = \prod_{k=1}^r
\Tr(A_{i_{(k, 1)}}A_{i_{(k, 2)}} \cdots A_{i_{(k, l(k))}} ) \\
&= \sum_{ i_1, \dots, i_m =1}^M [A_1]_{i_1, i_{\sigma(1)}} [ A_2]_{i_2, i_{\sigma(2)}}
\cdots [A_m]_{i_m, i_{\sigma(m)}}.\
\end{align*}
and for the normalized trace we let 
 \begin{align*}
 \tr_{\sigma} ( A_1, A_2, \dots, A_m )&  = \prod_{k=1}^r
 \tr(A_{i_{(k, 1)}}A_{i_{(k, 2)}} \cdots A_{(i_{(k, l(k))}} ) \\
 &= M^{- \# (\sigma)}\sum_{ i_1, \dots, i_m =1}^M [A_1]_{i_1, i_{\sigma(1)}} [ A_2]_{i_2, i_{\sigma(2)}}
 \cdots [A_m]_{i_m, i_{\sigma(m)}}.
 \end{align*}

 The pair-partition 
 $ q \in \cP_2^{\varepsilon} (2m) $
 induces a unique permutation 
 $ \widehat{q} \in \mathcal{S}(m) $
 via the relation
 $ 2\widehat{q}(s) -1  =  q(2s)$
 (note that since
 $ q \in \cP_2^{\varepsilon}(2m ) $,
 the value $q(2s) $ is odd for all $ s $).
 Denoting 
 $ t_s = \gamma_{2s-1} $, 
 and
 $ \vec{t} =(t_1, \dots, t_{m})$
 we have that 
 \begin{align*}
 \sum_{ \vec{\gamma} \in [M]^{2m} }
 \prod_{s =1}^m 
 [ T_s]_{ \gamma_{2s-1}, \gamma_{2s}}
 \delta^{\gamma_{2s}}_{\gamma_{ q(2s)}} 
 = \sum_{\vec{t}\in [ M]^m }
 \prod_{s =1}^m 
 [ T_s]_{ t_s, t_{\widehat{q}(s)} }
 = \Tr_{\widehat{q}} \big(T_1, T_2, \dots, T_m \big).
 \end{align*}
 
 Finally, note also that 
 $ \delta^{
 	\psi(\alpha_{\widehat{\theta}(2s)}, \beta_{\widehat{\eta}(2s) })
 }_{
 	\psi(\alpha_{\widehat{\theta}\circ p(2s)}, 
 	\beta_{\widehat{\eta}\circ p(2s)})
 }  
 =
 \delta^{
 	\alpha_{\widehat{\theta}(2s)}
 }_{
 	\alpha_{\widehat{\theta}\circ p(2s)}
 }
 \cdot
 \delta^{\beta_{\widehat{\eta}(2s) }}_{\beta_{\widehat{\eta}\circ p(2s)}}
 $ 
 and that
 \begin{align*}
 \prod_{s=1}^m 
 \delta^{
 	\alpha_{\widehat{\theta}(2s)}
 }_{
 	\alpha_{\widehat{\theta}\circ p(2s)}
 }
 =
 \prod_{s=1}^{2m}
 \delta^{
 	\alpha_{\widehat{\theta}(s)}
 }_{
 	\alpha_{\widehat{\theta}\circ p(s)}
 }
 =
 \prod_{s=1}^{2m}
 \delta^{
 	\alpha_{s}
 }_{
 	\alpha_{\widehat{\theta}\circ p \circ \widehat{\theta}^{-1} (s)}
 }.
 \end{align*}
 
  Therefore, denoting
 \begin{align}\label{it:W}
 \mathcal{W}_{\theta, \eta} 
 (p, q, T_1, \dots, T_m) = 
 \frac{1}{M}
 \mathrm{Wg}_M &(p, q ) 
 \cdot 
 \big[\E \circ \Tr_{\widehat{q}}\big(T_1,  \dots , T_m \big)\big] \\
 &\cdot
 \sum_{\substack{ \vec{\alpha}\in [b]^{2m} \\
 		\vec{\beta}\in [d]^{2m}}}
 \prod_{s=1}^{2m} 
 \delta^{
 	\alpha_{s}
 }_{
 	\alpha_{\widehat{\theta}\circ p \circ \widehat{\theta}^{-1} (s)}
 }
 \delta^{
 	\beta_{s}
 }_{
 	\beta_{\widehat{\eta}\circ p \circ \widehat{\eta}^{-1} (s)}
 }  	
 \prod_{s=1}^{m}\delta^{\alpha_{2s}}_{\alpha_{2s+1}}
 \delta^{\beta_{2s}}_{\beta_{2s+1}}\nonumber
 \end{align}
 we have that
\begin{align*}
\E \circ \tr\big( &
 T_1^{( \theta(1), \eta(1))}
\cdot
T_2^{( \theta(2), \eta(2)}
\cdots 
T_m^{( \theta(m), \eta(m))}
   \big)
=
\sum_{p, q \in \cP_2^{\varepsilon}(2m) } 
\mathcal{W}_{\theta, \eta}
(p, q, T_1, \dots, T_m).
\end{align*}

 If for all $ k $ we have that 
$ T_k = T $ 
(some 
$ M \times M $
random matrix), we shall write
$ \mathcal{W}_{\theta, \eta} (p, q, T)$
for
$\mathcal{W}_{\theta, \eta}
(p, q, T_1, T_2, \dots, T_m)$.
For example, if  
$ I_M $ 
denotes the identity $ M \times M $ matrix, we shall write
 $ \mathcal{W}_{\theta, \eta} (p, q, I_M)$
 for 
$ \mathcal{W}_{\theta, \eta} (p, q, I_M, \dots, I_M)$.

In order to  state the next result, we need to introduce one more notation.
We define the maps 
$ \sigma_{\theta}: [ 2m] \rightarrow \mathcal{S}( [b]^2 )$,
respectively
$ \sigma_{\eta}: [2m] \rightarrow \mathcal{S}( [d ]^2)$
via
\[
\sigma_{\theta}(2s-1) = \sigma_{\theta}(2s) = \left\{ 
\begin{array}{ll}
\textit{id} & \textrm{ if } \theta(s) = 1\\
\top & \textrm{if } \theta(s) = -1
\end{array}
\right.\textrm{, and }
\]
\[
\sigma_{\eta}(2s-1) = \sigma_{\eta}(2s) = \left\{ 
\begin{array}{ll}
\textit{id} & \textrm{ if } \eta(s) = 1\\
\top & \textrm{if } \eta(s) = -1
\end{array}
\right..
\]

\begin{lemma}\label{lemma:W:V}
	With the notations from Sections \ref{weingarten} and \ref{prop:3:1},
	we have that 
	\begin{multline*}
	\mathcal{W}_{\theta, \eta} (p, q, I_M) 
	= \frac{1}{C(p, q) }
	\cdot 
	\mathcal{V} (\sigma_{\theta}, \varepsilon\circ \widehat{\theta}, b, 
	\widehat{\theta} \circ p \circ \widehat{\theta}^{-1},
	\widehat{\theta} \circ q \circ \widehat{\theta}^{-1}  ) \\
	\cdot 
	\mathcal{V}(\sigma_{\eta}, \varepsilon\circ \widehat{\eta},
	d,\widehat{\eta} \circ p \circ \widehat{\eta}^{-1},
	\widehat{\eta} \circ q \circ \widehat{\eta}^{-1} )
	+ o(M^0).
	\end{multline*}
\end{lemma}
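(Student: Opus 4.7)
The plan is to substitute $T_k = I_M$ into (\ref{it:W}), factor the resulting double sum, and match each factor to a $\mathcal{V}$ via the asymptotic multiplicativity of the Weingarten function from (\ref{w-mult}). First, since $[I_M]_{r,s} = \delta^r_s$, the trace factor reduces to $\E \circ \Tr_{\widehat{q}}(I_M,\dots,I_M) = M^{\#(\widehat{q})}$, while the remaining delta factors split into independent constraints on $\vec{\alpha} \in [b]^{2m}$ (involving only $\widehat{\theta}$) and $\vec{\beta} \in [d]^{2m}$ (involving only $\widehat{\eta}$), so the sum factorizes as $\Sigma_\theta(p,b) \cdot \Sigma_\eta(p,d)$. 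Hence
\[
\mathcal{W}_{\theta,\eta}(p,q,I_M) = \frac{M^{\#(\widehat{q})}}{M}\, \Wg_M(p,q)\, \Sigma_\theta(p,b)\, \Sigma_\eta(p,d).
\]

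Next, because $\widehat{\theta}$ and $\widehat{\eta}$ are involutions on $[2m]$, conjugation by either preserves the block structure of $p \vee q$, so both $|p \vee q|$ and $C(p \vee q)$ are invariant under this relabeling. Combining this with the identity $M^{-2m + |p \vee q|} = b^{-2m + |p \vee q|}\, d^{-2m + |p \vee q|}$ and applying (\ref{w-mult}) to $\Wg_M$, $\Wg_b$, and $\Wg_d$ yields
\[
\Wg_M(p,q) = \frac{1}{C(p,q)}\, \Wg_b(\widehat{\theta} p \widehat{\theta}^{-1},\, \widehat{\theta} q \widehat{\theta}^{-1})\, \Wg_d(\widehat{\eta} p \widehat{\eta}^{-1},\, \widehat{\eta} q \widehat{\eta}^{-1}) + \text{l.o.t.},
\]
producing the $1/C(p,q)$ prefactor in the statement.

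The remaining step is the combinatorial identification
$|\mathcal{A}^{(\widehat{\theta} p \widehat{\theta}^{-1},\, \widehat{\theta} q \widehat{\theta}^{-1})}_{\sigma_\theta,\, \varepsilon \circ \widehat{\theta},\, b}| = b^{\#(\widehat{q})}\, \Sigma_\theta(p,b)$, and symmetrically for $d$. To establish this I would reindex $\alpha_s := i_{\widehat{\theta}(s)}$ (using $\widehat{\theta}^{2} = \mathrm{id}$): the $\alpha$-pairing constraint then matches the $k$-type constraint defining $\mathcal{A}$ once the action of $\sigma_\theta$ and the composition $\varepsilon \circ \widehat{\theta}$ are unpacked, while the cyclic boundary conditions $\alpha_{2s} = \alpha_{2s+1}$ encode the trace structure implicit in the domain of $\mathcal{A}$. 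The $l$-type constraint in $\mathcal{A}$ is absent from $\Sigma_\theta$ and contributes one extra free index per cycle of $\widehat{q}$, which accounts for the $b^{\#(\widehat{q})}$ factor. Multiplying all pieces together and using $M^{\#(\widehat{q})} = b^{\#(\widehat{q})} d^{\#(\widehat{q})}$ recovers the claimed identity up to $o(M^0)$. The hard part is precisely this bookkeeping: verifying that the interplay among $\sigma_\theta$, $\varepsilon \circ \widehat{\theta}$, and conjugation by $\widehat{\theta}$ makes the $k$-constraint correspond to the $\widehat{\theta} p \widehat{\theta}^{-1}$-pairing of the $\alpha$'s, and that the cycles of $\widehat{q}$ correspond precisely to the undetermined indices left after imposing only the $l$-constraint.
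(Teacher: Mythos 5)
Your proposal follows essentially the same route as the paper's proof: substitute $I_M$, split each index via $\psi$ into its $[b]$- and $[d]$-components so that the sum in (\ref{it:W}) factorizes, use the asymptotic multiplicativity $\Wg_b\cdot\Wg_d = C(p,q)\,\Wg_M + \mathrm{l.o.t.}$ to produce the $1/C(p,q)$ prefactor, and identify each factor with the defining sum of $\mathcal{V}$ by a reindexing (the paper sets $i_{2s-1}=\alpha_{2s-1}$, $j_{2s-1}=\xi'_{2s-1}$, $i_{2s}=\xi'_{2s}$, $j_{2s}=\alpha_{2s}$ and keeps the $\xi'$-sum inside each factor rather than extracting $b^{\#(\widehat{q})}d^{\#(\widehat{q})}$ up front, but this is the same bookkeeping). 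The only point the paper makes slightly more explicit is that the error term is $o(M^0)$, which follows since each $\mathcal{V}$ is $O(M^0)$ by Lemma \ref{lemma:prop:3:1}.
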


\begin{proof}
	First, note that if
	$ M = b\cdot d $ with $ b, d \rightarrow \infty $,
	then, for any 
	$ p, q \in \cP_2^{\varepsilon}(2m)$, 
	Equation (\ref{w-mult}) gives that
	\begin{align*}
	\textrm{Wg}_b(p, q) \cdot \textrm{Wg}_d (p, q) =
	C(p, q)\cdot  \textrm{Wg}_M(p, q)
	+ o(M^{-2m+| p \vee q |}).
	\end{align*}
	
	Next, remark that, for each
	$ s \in [ m ] $, 
	the set 
	$\{ 2s-1, 2s\} $
	is invariant under both maps 
	$ \widehat{\theta}$
	and 
	$ \widehat{\eta}$. 
	Therefore, denoting 
	$ \gamma_{s} = \psi( \gamma^\prime_{s}, \gamma^{\prime\prime}_{s}) $,
	and
	$ \xi^\prime_s = \gamma^\prime_{\widehat{\theta}}(s)$,
	$ \xi^{\prime\prime}_s = \gamma^{\prime\prime}_{\widehat{\eta}}(s)$
	we have that
	\begin{align*}\lefteqn{
	\prod_{s=1}^m[I_M]_{\gamma_{2s-1}, \gamma_{2s}}
	\delta^{\gamma_{2s}}_{\gamma_{q(2s )} } }\\
	&= 
	\prod_{s=1}^{m}
	\delta_{\gamma^\prime_{\widehat{\theta}(2s-1) } }^{\gamma^\prime_{\widehat{\theta}(2s)} }
	\delta^{\gamma^\prime_{2s}}_{\gamma^\prime_{q(2s )} }
	\delta_{\gamma^{\prime\prime}_{\widehat{\eta}(2s-1) } }^{\gamma^{\prime\prime}_{\widehat{\eta}(2s)} }	
	\delta^{\gamma^{\prime\prime}_{2s}}_{\gamma^{\prime\prime}_{q(2s )} }
	=
	\prod_{s=1}^m 
	\delta_{\xi_{2s-1}^\prime }^{\xi_{2s}^\prime}
	\delta_{\xi_{2s-1}^{\prime\prime} }^{\xi_{2s}^{\prime\prime}}
	\prod_{s=1}^{2m} 
	\delta^{\xi^\prime_s}_{\xi^\prime_{\widehat{\theta} \circ q \circ\widehat{\theta}^{-1} (s) } }
	\delta^{\xi^{\prime\prime}_s}_{\xi^{\prime\prime}_{\widehat{\theta} \circ q \circ\widehat{\theta}^{-1} (s) } }.
	\end{align*}
	Equation (\ref{it:W}) gives then 
	\begin{align}\label{w:2buc}
	\mathcal{W}_{\theta, \eta} (p, q, I_M) = 
	\frac{1}{M}
	\textrm{Wg}_M &(p, q) \cdot
	\big(  \sum_{ \vec{\alpha}, \vec{\xi^\prime} \in [ b ]^{2m}  }
	\prod_{s=1}^{m}
	\delta^{\xi^\prime_{2s}}_{\xi^\prime_{2s-1} }
	\delta^{\alpha_{2s}}_{\alpha_{2s+1}}
	\prod_{s=1}^{2m} 
	\delta^{
		\alpha_{s}
	}_{
		\alpha_{\widehat{\theta}\circ p \circ \widehat{\theta}^{-1} (s)}}
	\delta^{\xi^\prime_s}_{\xi^\prime_{\widehat{\theta} \circ q \circ \widehat{\theta}^{-1} (s) } }
	\big)\\
	&\cdot \big(
	\sum_{ \vec{\beta}, \vec{\xi^{\prime\prime}} \in [ d ]^{2m}  }
	\prod_{s=1}^{m}
	\delta^{\xi^{\prime\prime}_{2s}}_{\xi^{\prime\prime}_{2s-1} }
	\delta^{\beta_{2s}}_{\beta_{2s+1}}
	\prod_{s=1}^{2m} 
	\delta^{
		\beta_{s}
	}_{
		\beta_{\widehat{\eta}\circ p \circ \widehat{\eta}^{-1} (s)}}
	\delta^{\xi^{\prime\prime}_s}_{\xi^{\prime\prime}_{\widehat{\eta} \circ q \circ\widehat{\eta}^{-1} (s) } }
	\big).
	\nonumber
	\end{align}
	
	On the other hand, with the notations from Section \ref{prop:3:1}, if, for each
	$ s \in [ m ] $, 
	we put
	$ i_{2s-1} = \alpha_{2s-1} $, 
	$ j_{2s-1} = \xi^\prime_{2s-1}$,
	$ i_{2s}= \xi^\prime_{2s} $
	and
	$ j_{2s} = \alpha_{2s} $,
	then, for each
	$ t \in [2m] $
	we have that 
	\begin{align*}
	(k_t, l_t) = \sigma_{\theta}(t) \circ (\varepsilon \circ \widehat{\theta})(t)( i_t, j_t) = (\alpha_t, \xi^\prime_t).
	\end{align*}  
	therefore Equation (\ref{v:pq}) reads:
	\begin{align}\label{V:1}
	\mathcal{V} (\sigma_{\theta}, \varepsilon\circ \widehat{\theta}, b, 
	\widehat{\theta} \circ p \circ \widehat{\theta}^{-1},
	\widehat{\theta} \circ q \circ & \widehat{\theta}^{-1}  )
	=\\
	\frac{1}{b} \textrm{Wg}_b (\widehat{\theta} \circ p \circ \widehat{\theta}^{-1}, 
	\widehat{\theta} \circ q \circ  \widehat{\theta}^{-1}) 
	& \cdot \big(  \sum_{ \vec{\alpha}, \vec{\xi^\prime} \in [ b ]^{2m}  } 
	\prod_{s=1}^{m}
	\delta^{\xi^\prime_{2s}}_{\xi^\prime_{2s-1} }
	\delta^{\alpha_{2s}}_{\alpha_{2s+1}}
	\prod_{s=1}^{2m} 
	\delta^{
		\alpha_{s}
	}_{
		\alpha_{\widehat{\theta}\circ p \circ \widehat{\theta}^{-1} (s)}}
	\delta^{\xi^\prime_s}_{\xi^\prime_{\widehat{\theta} \circ q \widehat{\theta}^{-1} (s) } }
	\big).
	\nonumber
	\end{align}	
	Similarly, we have that
	\begin{align}\label{V:2}
	\mathcal{V} (\sigma_{\eta}, \varepsilon\circ \widehat{\eta}, d, 
	\widehat{\eta} \circ p \circ \widehat{\eta}^{-1},
	\widehat{\eta} \circ q & \circ  \widehat{\eta}^{-1}  )
	=\\
	\frac{1}{d} \textrm{Wg}_d (\widehat{\eta} \circ p \circ \widehat{\eta}^{-1}, 
	\widehat{\eta} \circ q \circ  \widehat{\eta}^{-1}) 
	& \cdot \big(  \sum_{ \vec{\beta}, \vec{\xi^{\prime\prime}} \in [ d ]^{2m}  } 
	\prod_{s=1}^{m}
	\delta^{\xi^{\prime\prime}_{2s}}_{\xi^{\prime\prime}_{2s-1} }
	\delta^{\beta_{2s}}_{\beta_{2s+1}}
	\prod_{s=1}^{2m} 
	\delta^{
		\beta_{s}
	}_{
		\beta_{\widehat{\eta}\circ p \circ \widehat{\eta}^{-1} (s)}}
	\delta^{\xi^{\prime\prime}_s}_{\xi^{\prime\prime}_{\widehat{\eta} \circ q \widehat{\eta}^{-1} (s) } }
	\big).
	\nonumber
	\end{align}
	
	Since 
	$ C(p, q) =
	C( \widehat{\theta} \circ p \circ \widehat{\theta}^{-1},\widehat{\theta} \circ q \circ \widehat{\theta}^{-1} )
	=
	C( \widehat{\eta} \circ p \circ \widehat{\eta}^{-1},\widehat{\eta} \circ q \circ \widehat{\eta}^{-1} )
	$
	the conclusion follows from Equations (\ref{w:2buc}), (\ref{V:1}), (\ref{V:2}) and Lemma \ref{lemma:prop:3:1}.
	
\end{proof}

\begin{corollary}\label{cor:2:3}
If 
$ \mathcal{W}_{\theta, \eta} (p, q, I_M) \neq o(M^0)$
then the following conditions are satisfied:
	\begin{enumerate}
		\item[$(i)$]the pairs 
		$ ( \widehat{\theta} \circ p \circ \widehat{\theta},
		\widehat{\theta} \circ q \circ \widehat{\theta},) $
		and 
		$ ( \widehat{\eta} \circ p \circ \widehat{\eta},
		\widehat{\eta} \circ q \circ \widehat{\eta},) $
		satisfy conditions $(i)$ and $(ii)$ from Lemma \ref{lemma:prop:3:1}.
		\item[$(ii)$] if 
		$ s $ 
		and 
		$ t $ 
		are in the same block of 
		$ p \vee q $ 
	then
	$ \theta(s) = \theta(t)$
	and
	$ \eta(s) = \eta(t) $.
	\end{enumerate} 
\end{corollary}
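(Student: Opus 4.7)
The plan is to derive both parts from Lemma \ref{lemma:W:V}, which expresses $\mathcal{W}_{\theta,\eta}(p,q,I_M)$, up to an $o(M^0)$ error, as $C(p,q)^{-1}$ times the product of two factors:
$\mathcal{V}_\theta := \mathcal{V}(\sigma_\theta, \varepsilon\circ\widehat{\theta}, b, \widehat{\theta} p \widehat{\theta}^{-1}, \widehat{\theta} q \widehat{\theta}^{-1})$,
involving the parameter $b$, and $\mathcal{V}_\eta$ defined analogously with $\widehat{\eta}$ and parameter $d$.

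For part $(i)$, I would invoke Lemma \ref{lemma:prop:3:1} to note that $\mathcal{V}_\theta$ is either $O(1)$ or $O(b^{-1})$, and $\mathcal{V}_\eta$ is either $O(1)$ or $O(d^{-1})$. Since $b$ and $d$ tend to infinity independently and $M = bd$, the product $\mathcal{V}_\theta\cdot\mathcal{V}_\eta$ can avoid being $o(M^0)$ only when both factors are simultaneously of order $O(1)$. Applying Lemma \ref{lemma:prop:3:1} separately to each factor then yields conditions $(i)$ and $(ii)$ of that lemma for the two conjugated pairs, giving part $(i)$ of the corollary.

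For part $(ii)$, I would argue contrapositively: if $\theta$ is not constant on some block of $p\vee q$, then $\mathcal{V}_\theta$ fails the hypotheses of Lemma \ref{lemma:prop:3:1}. Fix a block $B=\{a_1<\cdots<a_r\}$ of $p\vee q$ and let $B':=\widehat{\theta}(B)=\{b_1<\cdots<b_r\}$, which is a block of $(\widehat{\theta} p \widehat{\theta}^{-1})\vee (\widehat{\theta} q \widehat{\theta}^{-1})$. By part $(i)$, $B'$ satisfies
$\{\widehat{\theta} p \widehat{\theta}^{-1}(b_s),\widehat{\theta} q \widehat{\theta}^{-1}(b_s)\}=\{b_{s-1},b_{s+1}\}$.
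Setting $c_s:=\widehat{\theta}(b_s)\in B$ and pulling back by $\widehat{\theta}$ yields $\{p(c_s),q(c_s)\}=\{c_{s-1},c_{s+1}\}$, so $c_1,\ldots,c_r$ is a valid cyclic ordering of $B$ realizing the $p$-$q$ alternation.

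The main obstacle is the final combinatorial step: such cyclic orderings of $B$ are unique up to rotation and reflection, so $c_1,\ldots,c_r$ must agree, up to rotation and reflection, with the natural sorted order $a_1<\cdots<a_r$. Since $\widehat{\theta}$ swaps within a pair $\{2s-1,2s\}$ exactly when $\theta(s)=-1$ and fixes that pair pointwise when $\theta(s)=1$, the discrepancy between the sorted orders of $B$ and $\widehat{\theta}(B)$ is entirely controlled by the sign pattern of $\theta$ on the pair-indices of elements of $B$. A case analysis — distinguishing pairs $\{2s-1,2s\}$ entirely contained in $B$, having exactly one element in $B$, or lying outside $B$ — shows that mixed values of $\theta$ on these pair-indices force the sorted order of $\widehat{\theta}(B)$ to differ from that of $B$ by a permutation which is not a rotation or reflection of the cyclic order, contradicting the rigidity above. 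The identical argument with $\widehat{\eta}$ in place of $\widehat{\theta}$ proves constancy of $\eta$ on each block.
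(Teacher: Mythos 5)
Your part $(i)$ is exactly the paper's argument: combine the factorization of Lemma \ref{lemma:W:V} with the $O(M^0)$ versus $O(M^{-1})$ dichotomy of Lemma \ref{lemma:prop:3:1} applied to each of the two $\mathcal{V}$-factors. Part $(ii)$, however, has genuine gaps. First, the step you yourself flag as ``the main obstacle'' is the entire content of the claim and is never carried out; asserting that a case analysis ``shows'' the conclusion is not a proof. Second, the anchor of your comparison is unjustified: the rigidity ``alternating $p$--$q$ orderings of a block are unique up to rotation and reflection'' only lets you compare two orderings that \emph{both} realize the alternation, and the sorted order $a_1<\cdots<a_r$ of $B$ is not known to realize it --- condition $(ii)$ of Lemma \ref{lemma:prop:3:1} is available only for the conjugated pairs, i.e., for the sorted order of $\widehat{\theta}(B)$, not of $B$ itself. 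Third, and most seriously, the strategy cannot work for two-element blocks, where every cyclic ordering is trivially a rotation/reflection of every other: take $m=2$, $p=q=\{(1,4),(2,3)\}$ and $\theta(1)\neq\theta(2)$. Then $\{2,3\}$ is a block of $p\vee q$ on which $\theta$ is not constant, yet there is no within-block ordering obstruction; what actually fails is that $\widehat{\theta}\circ p\circ\widehat{\theta}^{-1}\vee\widehat{\theta}\circ q\circ\widehat{\theta}^{-1}=\{\{1,3\},\{2,4\}\}$ is crossing. So in general the obstruction to non-constant $\theta$ is a crossing \emph{between} blocks of the conjugated join, which your block-by-block rigidity argument never sees.

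The paper's proof of $(ii)$ is structured quite differently and avoids all of this: it inducts on $m$, choosing an interval block $B$ of the conjugated join (which is non-crossing by part $(i)$). If $\theta$ or $\eta$ were non-constant on $B$, then since both are constant on each pair $\{2k-1,2k\}$ there would be adjacent elements $2k,2k+1\in B$ with $\theta(2k)\neq\theta(2k+1)$ or $\eta(2k)\neq\eta(2k+1)$; in the first case $\varepsilon\circ\widehat{\theta}$ takes equal values on consecutive elements of a block, contradicting the alternation forced by Lemma \ref{lemma:prop:3:1}$(ii)$, and in the second case the $\widehat{\eta}$-conjugated join acquires a singleton block, again a contradiction. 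You should either adopt that route or substantially rework yours to account for the crossing obstruction and to supply the missing case analysis.
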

\begin{proof}
Part $(i)$ is an immediate consequence of  Lemmata \ref{lemma:prop:3:1} and \ref{lemma:W:V}.
For part $(ii)$, it suffices to show that if 
$ \mathcal{W}_{\theta, \eta} (p, q, I_M )  \neq o(M^0)$
then
$ \theta$ and 
$ \eta $
are constant on the blocks of 
$ \widehat{\theta} \circ p \circ \widehat{\theta} \vee
\widehat{\theta} \circ q \circ \widehat{\theta} $.
	
Assume that
$ \mathcal{W}_{\theta, \eta} (p, q, I_M)  \neq o(M^0)$.
Part $(i)$ gives that 
$ \widehat{\theta} \circ p \circ \widehat{\theta} \vee
\widehat{\theta} \circ q \circ \widehat{\theta} $
is non-crossing, hence it has a block
$ B $
with all elements consecutive. If 
$ \widehat{\theta} $ and $ \widehat{\eta} $ 
are constant on
$ B $, 
then the conclusion follows by erasing the block $ B $
and applying induction on 
$ m $. 
Suppose that either
$ \widehat{\theta} $ or $\widehat{ \eta} $ 
is not constant on $ B $. 
By construction, 
	$ \widehat{\theta} (2k-1) = \widehat{\theta}(2k) $ 
and 
	$ \widehat{\eta}(2k-1) = \widehat{\eta}(2k) $ 
	for all 
	$ k \in [ m ]$. 
	Hence there exist some 
	$ k \in [ m ] $ 
	with 
	$ 2k, 2k+1 \in B $
	and
	$ \widehat{\theta}(2k) \neq \widehat{\theta} (2k+1) $
	or 
	$ \widehat{\eta}(2k) \neq \widehat{\eta}(2k+1) $.
	If 
	$ \widehat{\theta}(2k) \neq \widehat{\theta} (2k+1) $
	then
	$ \varepsilon ( \widehat{\theta} (2k)) = \varepsilon (\widehat{\theta}(2k+1))$,
	which contradicts part $(ii)$ of Lemma \ref{lemma:prop:3:1}. 
	If 
	$ \widehat{\theta} (2k) = \widehat{\theta} (2k+1) $
	but
	$ \widehat{\eta}(2k) \neq \widehat{\eta}(2k+1)$,
	then 
	$ | \widehat{\eta}(2k+1) - \widehat{\eta}(2k) |  = 2 $.
	Since 
	$ \widehat{\eta} \circ p \circ \widehat{\eta} \vee 
	\widehat{\eta} \circ q\circ  \widehat{\eta} $ 
	is non-crossing, it follows that it has a block with exactly one element, which again contradicts Lemma \ref{lemma:prop:3:1}.
\end{proof}

\begin{lemma}\label{lemma:2:4}
	Suppose that 
	$ \theta(s)= 1 $ 
	and 
	$ \eta(s) = -1 $
	for all 
	$ s \in [ m ] $.
	If
	$p, q$
	are such that
	$ \mathcal{W}_{\theta, \eta} (p, q, I_M) \neq o(M^0)$, 
	then 
	$ p\vee q $
	is non-crossing and its blocks are of one of the following types:
	\begin{enumerate}
		\item[$(i)$] $(2k-1, 2k)$, 
		for some 
		$ k \in [ m ] $;
		\item[$(ii)$] $ (2k-1, 2k, 2t-1, 2t) $
        for some 
		$ k, t \in [m ]$
		with
		$ k < t $;
		\item[$(iii)$] pairs of the type 
		$ (2k-1, 2t)$, $(2k, 2t-1)$,
		for some 
		$ k, t \in [m ]$
		with
		$ k < t $.
	\end{enumerate}
	Moreover, we have that: 
	\begin{enumerate}
		\item[$\mybullet$] if 
		$ p \vee q $
		has a block of the form 
		$ (2k-1, 2k)$, then 
		$ \alpha_{2k-2} = \alpha_{2k+1} $
		and
		$ \beta_{2k-2} = \beta_{2k+1} $;
		\item[$\mybullet$] if
		$ p \vee q $
		has a block of the form 
		$ (2k-1, 2k, 2t-1, 2t) $
		with
		$ k< t $, 
		then
		$q(2k-1) = 2k $
		and
		$ \alpha_{2k-2} = \alpha_{2t+1} $,
		$ \beta_{2k-2} = \beta_{2t+1} $,
		%and
		$ \alpha_{2k+1}= \alpha_{2t-2}$,
		$ \beta_{2k+1} = \beta_{2t-2}$;
		\item[$\mybullet$] if
		$ p \vee q $
		has the blocks 
		$ (2k-1, 2t)$, $(2k, 2t-1)$,
		for some 
		$ k < t $
		then again
		$ \alpha_{2k-2} = \alpha_{2t+1} $,
		$ \beta_{2k-2} = \beta_{2t+1} $,
		$ \alpha_{2k+1}= \alpha_{2t-2}$,
		$ \beta_{2k+1} = \beta_{2t-2}$.
	\end{enumerate}
\end{lemma}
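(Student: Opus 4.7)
The approach will be combinatorial induction on $m$, driven by Corollary~\ref{cor:2:3}$(i)$ and by the fact that every non-crossing partition has an interval block. Since $\theta\equiv 1$ gives $\widehat\theta=\mathrm{id}$ and $\widehat\eta$ is the involution of $[2m]$ swapping $2s-1\leftrightarrow 2s$ for every $s\in[m]$, Corollary~\ref{cor:2:3}$(i)$ asserts that \emph{both} $p\vee q$ and $\widehat\eta(p\vee q)$ are non-crossing and satisfy the nearest-neighbour property of Lemma~\ref{lemma:prop:3:1}. In particular, $p\vee q$ admits an interval block $B_0=\{s+1,s+2,\ldots,s+t\}$ with $t$ even, and the classification will be obtained by determining which shapes of $B_0$ are compatible with \emph{both} nearest-neighbour conditions, removing $B_0$ together with any partner block, relabelling the surviving indices, and inducting.

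The shape of $B_0$ is determined by the parity of $s$. When $s$ is odd, computing $\widehat\eta(B_0)$ element by element shows that two adjacent elements of the same parity appear in the sorted image whenever $t\ge 4$, violating the parity-alternation required by condition~$(ii)$ of Lemma~\ref{lemma:prop:3:1} for $\widehat\eta(p\vee q)$; hence $t=2$, i.e.\ $B_0=\{2k,2k+1\}$. Since $\widehat\eta(B_0)=\{2k-1,2k+2\}$ encloses $\{2k,2k+1\}$ in $\widehat\eta(p\vee q)$, non-crossing there forces the block of $p\vee q$ containing $2k-1$ to be exactly $\{2k-1,2k+2\}$: this yields the partner block required by type~$(iii)$ with $t=k+1$. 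When $s$ is even, $\widehat\eta$ restricted to $B_0$ swaps the adjacent pairs $(a_{2j-1},a_{2j})$, producing in the new cyclic order the sequence $(a_2,a_1,a_4,a_3,\ldots,a_t,a_{t-1})$ on sorted positions; for the nearest-neighbour condition on $\widehat\eta(p\vee q)$ to hold, this must be a rotation or reverse-rotation of $(a_1,\ldots,a_t)$, which happens only for $t\in\{2,4\}$ (the transition $a_1\to a_4$ is a single cyclic step only when $t=4$). This yields $B_0=\{2k-1,2k\}$ (type~$(i)$) or $B_0=\{2k-1,2k,2k+1,2k+2\}$ (type~$(ii)$ with $t=k+1$); the non-consecutive instances of types~$(ii)$ and $(iii)$ come out of the relabelling in the inductive step.

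For a type~$(ii)$ block $\{2k-1,2k,2t-1,2t\}$, the nearest-neighbour property of Lemma~\ref{lemma:prop:3:1} still allows two pairings for $(p,q)$: either $p=(2k-1,2k)(2t-1,2t)$ with $q$ pairing ``across'', or the swap. In the formula \eqref{it:W}, $p$ governs the $\vec\alpha,\vec\beta$-delta constraints while $q$ governs $\Tr_{\widehat q}(I_M,\ldots,I_M)=M^{\#(\widehat q)}$: in the ``$q$-across'' option $\widehat q$ gains a $2$-cycle on $\{k,t\}$ (losing one factor of $M$), and the ``$p$-short'' pairing in combination with trace cyclicity merges the $\vec\alpha$- and $\vec\beta$-orbit classes around $\{2k-2,\ldots,2k+1\}$ with those around $\{2t-2,\ldots,2t+1\}$, losing one additional factor each of $b$ and $d$. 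The net loss of $M^{-2}$ makes the ``$q$-across'' case contribute $o(M^0)$, so it is excluded and $q(2k-1)=2k$ is forced.

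The ``moreover'' identities then follow by reading off \eqref{it:W}: combine $\alpha_s=\alpha_{p(s)}$ (and the $\widehat\eta$-conjugate statement for $\beta$) with the trace-cyclicity deltas $\alpha_{2s}=\alpha_{2s+1}$. For type~$(i)$, $p(2k-1)=2k$ chains to $\alpha_{2k-2}=\alpha_{2k-1}=\alpha_{2k}=\alpha_{2k+1}$, and similarly for $\beta$ since $\widehat\eta p\widehat\eta^{-1}$ preserves the pair $(2k-1,2k)$. For type~$(ii)$ in the selected pairing, $p(2k-1)=2t$ and $p(2k)=2t-1$ chain through the trace cyclicity at $2k-2,2k+1,2t-2,2t+1$ to give the four stated equalities; $\widehat\eta p\widehat\eta^{-1}$ again preserves the pair set of $p$ on the block, so the $\beta$-identities follow identically. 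For type~$(iii)$, each of the two blocks has size $2$, so $p=q$ pairs it directly, and the same chaining applies. I expect the main obstacle to be the asymptotic counting underlying the exclusion of ``$q$-across'' in type~$(ii)$: tracking precisely how the $\vec\alpha,\vec\beta$-orbit counts combine with $\#(\widehat q)$ across the block and its surroundings, and verifying that the deficit is always $M^{-2}$ independently of the interaction with the neighbouring blocks.
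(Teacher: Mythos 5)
Your proposal follows essentially the same route as the paper: use Corollary~\ref{cor:2:3} to get that both $p\vee q$ and its $\widehat\eta$-conjugate are non-crossing and satisfy the nearest-neighbour condition of Lemma~\ref{lemma:prop:3:1}, classify an interval block of $p\vee q$ by how $\widehat\eta$ acts on it, exclude the wrong pairing inside a four-element block by an order count, and read the index identities off the delta constraints in~(\ref{it:W}). Your parity/rotation analysis of the interval block is a correct reorganization of the paper's case split (the paper instead asks whether the block contains a pair $(2k,2k+1)$ and pins down the $4$-block via the relations (\ref{o1})--(\ref{o4})), and your ``moreover'' derivations match the paper's chaining of $\alpha_s=\alpha_{p(s)}$ with the trace deltas. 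The one step you flag as uncertain --- showing that the deficit in the $p(2k-1)=2k$ configuration is uniform in the surrounding blocks --- is exactly where the paper diverges from your global orbit count: it localizes the estimate using Lemma~\ref{lemma:segment}, observing that $p(2k-1)=2k$ forces $\alpha_{2k-1}=\alpha_{2k}$, $\alpha_{2t-1}=\alpha_{2t}$ and $\xi'_{2k-1}=\xi'_{2k}=\xi'_{2t-1}=\xi'_{2t}$, so the restricted index set $\iota_B(\mathcal{A}^{(p,q)}_{\mathrm{id},\theta,b})$ has at most $b^3$ elements (versus $b^4$ for the admissible pairing), independently of what happens outside $B$. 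If you replace your global bookkeeping by this localized count, your argument is complete; as written, the uniformity claim is asserted rather than proved.
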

\begin{proof}
	The condition 
	$ \theta(s)= 1 $ 
	for all
	$ s $ 
	gives that 
	$ ( \widehat{\theta} \circ p \circ \widehat{\theta},
	\widehat{\theta} \circ q \circ \widehat{\theta})  = (p, q ) $
	so,  according to Corollary \ref{cor:2:3},
	$ p \vee q $
	is non-crossing. 
	So
	$ p \vee q $ 
	has some block with all elements consecutive, that we shall denote by
	$ B $.
	It suffices to show that 
	$ B $
	satisfies the properties above, and the conclusion follows by eliminating the block 
	$ B $ 
	and induction on 
	$ m $.
	
	If for all
	$k $ 
	we have that the pair 
	$ (2k, 2k+1)$ 
	is not in 
	$ B$,
	then Lemma \ref{lemma:prop:3:1} gives that 
	$ B $ is as in case $(i)$.  Suppose that 
	$ k \in [ m ] $ 
	is such that 
	$ B $ 
	contains the pair
	$ (2k, 2k +1) $.
	Note that 
	$ \widehat{\eta} (B) $
	is a block in 
	$ \widehat{\eta} \circ p \circ \widehat{\eta} \vee
	\widehat{\eta} \circ q \circ \widehat{\eta} $. 
	Since
	$ \widehat{\eta}(2k) = 2k-1 $
	and
	$ \widehat{\eta}(2k+1) = 2k+2$,
	it follows that the pair
	$( 2k-1, 2k+2)$
	is included in 
	$ \widehat{\eta}(B)$.
	But 
	$ \widehat{\eta} \circ p \circ \widehat{\eta} \vee
	\widehat{\eta} \circ q \circ \widehat{\eta} $
	is non-crossing and does not have singleton blocks (Lemma \ref{prop:3:1}), so either 
	$ (2k, 2k+1)$ 
	is a block of 
	$ \widehat{\eta} \circ p \circ \widehat{\eta} \vee
	\widehat{\eta} \circ q \circ \widehat{\eta} $,
	or the pair
	$ (2k, 2k+1) $ 
	is included in 
	$ \widehat{\eta}(B) $.
	
	If 
	$ (2k, 2k+1)$ 
	is a block of 
	$ \widehat{\eta} \circ p \circ \widehat{\eta} \vee
	\widehat{\eta} \circ q \circ \widehat{\eta} $,
	then 
	$ (\widehat{\eta}(2k), (\widehat{\eta}(2k+1)) 
	=(2k-1, 2k+2)$
	is a block of 
	$ p\vee q $, 
	so case $(iii)$ occurs.
	
	If the pair
	$ (2k, 2k+1) $ 
	is included in 
	$ \widehat{\eta}(B) $, 
	then 
	$ \widehat{\eta}(B) $
	contains the entire $4$-tuple 
	$ (2k-1, 2k, 2k+1, 2k+2)$
	and so does 
	$ B $.
	Applying Lemma \ref{prop:3:1} to 
	$ p \vee q $ gives that
	\begin{align}
	&\{ p(2k), q(2k) \} = \{ 2k-1, 2k+1\} \label{o1}\\
	&\{ p(2k+1), q(2k+1)\} =\{ 2k, 2k+2\} \label{o2}
	\end{align}
	and also that
	$ 2k \in \{ p(2k-1), q(2k-1)\}$
	and
	$ 2k+1 \in \{ p(2k+1), q(2k+1)\}$.
	On the other hand, applying Lemma \ref{prop:3:1}
	to 
	$ \widehat{\eta}\circ p \circ \widehat{\eta} 
	\vee
	\widehat{\eta}\circ q \circ \widehat{\eta}$
	we get that
	$ 2k \in \{ \widehat{\eta}\circ p \circ \widehat{\eta}(2k+1),
	\widehat{\eta}\circ q \circ \widehat{\eta}(2k+1) \} $,
	that is
	$ 2k-1\in \{ p(2k+2), q(2k+2)\} $.
	It follows that 
	\begin{align}
	& \{ p(2k-1), q(2k-1)\}= \{ 2k, 2k+2\}\label{o3}\\
	&\{ p(2k+2), q(2k+2)\} =\{ 2k-1, 2k+1\}\label{o4}.
	\end{align}
	Relations (\ref{o1})--(\ref{o4}) and Lemma \ref{prop:3:1} give that 
	$ B = (2k-1, 2k, 2k+1, 2k+2)$, i.e. case $(ii)$ occurs.
	
	Assume now that 
	$ (2k-1, 2k, 2t-1, 2t) $
	is a block of 
	$ p \vee q $ 
	and that 
	$ p(2k-1) = 2k $.
	Since 
	$ \theta(s)= 1 $ 
	for all
	$ s $, 
	with the notations from the proof of Lemma \ref{lemma:W:V},
	we obtain that 
	$ \alpha_{2k-1}= \alpha_{2k}$, 
	that 
	$\alpha_{2t-1}  = \alpha_{2t}$
	and that 
	$ \xi^\prime_{2k-1} = \xi^\prime_{2k}= \xi^\prime_{2t-1}= \xi^\prime_{2t} $
	so 
	$ \big|  \iota_B ( \mathcal{A}^{(p, q)}_{\textrm{id}, \theta, b})\big| \leq b^3 $ 
	and the conclusion follows from 
	Lemma \ref{lemma:segment}.
	
	For the last part of the Lemma, note that the conditions 
	$ \theta(s) = 1 $ 
	and 
	$ \eta(s) = -1 $
	give that 
	$ \alpha_{2s} = \alpha_{2s+1} $
	and
	$ \beta_{2s-1}=\beta_{2s+2} $
	for all 
	$ s \in [ m-1] $.
	If 
	$ ( 2k-1, 2k) $
	is a block of 
	$ p \vee q $
	(as in case $(i)$),
	then we also have that
	$ \alpha_{2k-1} = \alpha_{2k} $
	and
	$ \beta_{2k-1}= \beta_{2k} $,
	so the conclusion follows. The argument for case $(iii)$ is similar. For case $(ii)$, the condition 
	$ q(2k-1) = 2k $
	gives that 
	$ p(2k-1) = 2t $
	so 
	$ \alpha_{2k-1} = \alpha_{2t}$,
	hence 
	$ \alpha_{2k-2} = \alpha_{2t+1} $;
	the equality 
	$ \beta_{2k-2}= \beta_{2t+1} $
	follows similarly.
\end{proof}

\section{Main results}

\subsection{Framework}\label{framework}
Throughout this Section, we will suppose that
$ \big( A_{1, N}, \dots, A_{R, N}\big)_N $ 
is a sequence of $ R $-tuples of random matrices such that:
\begin{enumerate}
	\item[($\mathfrak{c}1$)] For each 
	$ N $ 
	we have that  
	$ A_{1, N}, A_{1, N}^{\ast},\dots, A_{R, N}, A_{R, N}^{\ast}  $ 
	is a \textit{unitarily invariant} 
	$ 2R $-tuple of
	$ M_N \times M_N $ 
	random matrices, with
	$ M_N = b_N \cdot d_N $
	and the sequences
	$\big(b_N )_N $, $ \big(d_N \big)_N $
	are increasing and unbounded.
	\item[($\mathfrak{c}2$)] The sequence 
	$ \big( A_{1, N}, \dots, A_{R, N}\big)_N $ 
	has the \emph{bounded cumulants property} from \cite{mingo-popa-transpose}, that is for any  integer 
	$ r \geq 2 $ 
	and any non-commutative polynomials with complex coefficients 
	$ p_0, p_1, \dots, p_r \in \mathbb{C}\langle X_1, Y_1,\dots, X_R, Y_R \rangle $ 
	we have that:
	\begin{enumerate}
		\item [$(i)$]$ \displaystyle
		\limsup_{N \rightarrow \infty } 
		|\E \circ \tr \big( 
		p_0(A_{1, N}, A_{1, N}^\ast, \dots, A_{R, N}, A_{R, N}^\ast )
		\big)| < \infty $
		\item[$(ii)$] 
		$ \displaystyle 
		\limsup_{N \rightarrow \infty } 
		| k_r \big( 
		\Tr (p_1(A_{1, N}, A_{1, N}^\ast, \dots, A_{R, N},A_{R,N}^\ast ), 
		\dots, \\
		{}\hspace{6cm}	 	
		\Tr( p_r (A_{1, N}, A_{1, N}^\ast, \dots, A_{R, N}, A_{R, N}^\ast )\big) 
		\big) | < \infty 
		$,\\
		where 
		$k_r $ 
		denotes the $r$-th (classical) cumulant. 	 	 
	\end{enumerate} 
\end{enumerate}

\begin{remark}\label{rem5:1}
Condition ($\mathfrak{c}2$)$(i)$ implies the following property. If 
$ p_1, \dots, p_m $ 
are some non-commutative polynomials in $2R $ variables with complex coefficients and, for each $k$ we denote
 $ T_{k, N}= p_k(A_{1, N}, A_{1, N}^\ast, \dots, A_{R, N}, A_{R, N}^\ast)$, 
 then
 \begin{align*}
\limsup_{N \rightarrow \infty} | \E \big( \tr(T_{1, N})\tr(T_{2, N}) \cdots \tr(T_{m, N}) \big) |
< \infty.  \qquad
\end{align*} 
\end{remark}
\begin{proof}
If 
$ m =1 $, 
the result is trivial. If 
$ m \geq 2 $
then
 the Cauchy-Schwarz inequality gives that 
$| \tr (T_{k, N}) |^2 \leq \tr (T_{k, N}^\ast T_{k, N}) $,
hence, denoting
$ \displaystyle  B_{ N}= \sum_{k=1}^m T_{k, N}^\ast T_{k, N} $, 
we have that
\begin{align*}
| \tr(T_{1, N}) \cdot \tr(T_{2, N}) \cdots \tr(T_{m, N}) | \leq [\tr (B_N)]^{m/2}.
\end{align*}
On the other hand, 
$B_N \geq 0 $ 
and the function 
$ t \mapsto t^{m/2} $
is convex on 
$ [ 0, \infty) $,
hence
\begin{align*}
[\tr (B_N)]^{m/2} \leq \tr ( B_N^{m/2}) < \tr (B_N^m + I)
\end{align*}
and the conclusion follows.
\end{proof}

Let
 $\vec{r} =(r_1, \dots, r_m) \in [ R]^m $ and
 $\vec{\nu}=(\nu_1, \dots, \nu_m) \in \{ 1, \ast\}^m$;
 we shall write
$ \mathcal{W}_{\theta, \eta}( p, q,  A_{\vec{r}}^{\vec{\nu}})$
for
$ \mathcal{W}_{\theta, \eta}( p, q,  A_{r_1}^{\nu_1},
A_{r_2}^{\nu_2}, \dots,\ab A_{r_m}^{\nu_m})$, and
respectively
 $ \Tr\big( A_{\vec{r}}\big)$
 for
 $ \Tr\big( A_{r_1}, \dots, A_{r_m} \big) $
 etc.
 
\begin{lemma}
	If, as in Section \ref{aux}, 
	$\theta, \eta: [ m ] \rightarrow\{-1, 1 \}$ and
	$ p, q $
	are pair-partitions in 
	$ \cP_2^{\varepsilon} (2m) $ 
	such that 
	$ \mathcal{W}_{\theta, \eta}(p, q, I_{M_N}) = o(M_N^0)$, 
	then for any sequence of $ R $-tuples of
	random matrices 
	$( A_{1, N}, \dots, A_{R, N}) $
satisfying conditions $(\mathfrak{c}1)$ and $(\mathfrak{c}2)$ $(i)$ we have that
	$ \mathcal{W}^{\nu}_{\theta, \eta}(p, q, A_{1, N}, \dots, A_{m, N}) = o(M_N^0)$.
\end{lemma}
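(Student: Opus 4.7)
The plan is to exploit the fact that the matrix arguments $T_1,\ldots,T_m$ enter the quantity $\mathcal{W}_{\theta,\eta}(p,q,T_1,\ldots,T_m)$ only through the single multiplicative factor $\E\circ\Tr_{\widehat{q}}(T_1,\ldots,T_m)$. Inspecting the defining expression (\ref{it:W}), the Weingarten function $\mathrm{Wg}_{M_N}(p,q)$ and the purely combinatorial sum over $\vec{\alpha}\in [b_N]^{2m}$ and $\vec{\beta}\in[d_N]^{2m}$ depend only on $p,q,\theta,\eta,b_N,d_N$, not on the matrices themselves. Denote this common combinatorial/Weingarten prefactor by $K_{p,q,\theta,\eta,N}$, so that
\[\mathcal{W}_{\theta,\eta}(p,q,T_1,\ldots,T_m) = K_{p,q,\theta,\eta,N}\cdot\E\circ\Tr_{\widehat{q}}(T_1,\ldots,T_m).\]

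First I would specialize to $T_k = I_{M_N}$, where $\Tr_{\widehat{q}}(I_{M_N},\ldots,I_{M_N})=M_N^{\#(\widehat{q})}$ (a product of traces of identity matrices along the cycles of $\widehat{q}$), giving
\[\mathcal{W}_{\theta,\eta}(p,q,I_{M_N}) = M_N^{\#(\widehat{q})}\cdot K_{p,q,\theta,\eta,N}.\]
Combining the two displays and using $\tr_{\widehat{q}} = M_N^{-\#(\widehat{q})}\Tr_{\widehat{q}}$ yields the key identity
\[\mathcal{W}_{\theta,\eta}(p,q,A_{r_1,N},\ldots,A_{r_m,N}) = \mathcal{W}_{\theta,\eta}(p,q,I_{M_N})\cdot\E\bigl[\tr_{\widehat{q}}(A_{r_1,N},\ldots,A_{r_m,N})\bigr],\]
which cleanly separates the matrix-free factor (hypothesized to be $o(M_N^0)$) from the matrix-dependent one.

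Next, since $\tr_{\widehat{q}}(A_{r_1,N},\ldots,A_{r_m,N})$ unfolds as a product of $\#(\widehat{q})$ normalized traces of monomials in the $A_{k,N}$'s, Remark \ref{rem5:1} (a direct consequence of condition $(\mathfrak{c}2)(i)$ together with the Cauchy--Schwarz argument spelled out there) ensures that
\[\limsup_{N\to\infty}\bigl|\E\bigl[\tr_{\widehat{q}}(A_{r_1,N},\ldots,A_{r_m,N})\bigr]\bigr|<\infty.\]
Multiplying the $o(M_N^0)$ factor by this $O(1)$ quantity delivers the claimed bound. There is no real obstacle anticipated: the lemma reduces to a factorization of $\mathcal{W}_{\theta,\eta}$ into a matrix-free combinatorial/Weingarten piece and a matrix-dependent trace piece, plus the uniform trace bound supplied by Remark \ref{rem5:1}; the only care needed is in reading off from (\ref{it:W}) that the $T_k$'s appear in no other place.
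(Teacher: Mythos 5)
Your proposal is correct and coincides with the paper's own argument: both factor $\mathcal{W}_{\theta,\eta}(p,q,A_{1,N},\dots,A_{m,N})$ as $\mathcal{W}_{\theta,\eta}(p,q,I_{M_N})\cdot\E\circ\tr_{\widehat{q}}(A_{1,N},\dots,A_{m,N})$ using $\Tr_{\widehat{q}}=M_N^{\#(\widehat{q})}\tr_{\widehat{q}}$, and then bound the trace factor via Remark \ref{rem5:1}. No gaps.
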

\begin{proof}
	With the notations from Section \ref{aux}, we have that
	\begin{align*}
	\Tr_{\widehat{q}}(A_{1, N},\dots, A_{1, N})& = 
	M^{|\widehat{q}|} 
	\cdot
	\tr_{{\widehat{q}}}
	(A_{1, N},\dots, A_{m, N})\\
	&= \Tr_{\widehat{q}} (I_{M_N}, \dots, I_{M_N}) 
	\cdot
	\tr_{{\widehat{q}}}
	(A_{1, N},\dots, A_{m, N}),
	\end{align*}
that is
	\begin{align*}
	\mathcal{W}_{\theta, \eta}  (p, q, A_{1, N}, \dots, A_{m, N}) = 
	\mathcal{W}_{\theta, \eta} (p, q, I_{M_N})	
	\cdot
	\big[ \E \circ \tr_{\widehat{q}} (A_{1, N}, \dots, A_{m, N})\big].
	\end{align*} 
	and the conclusion follows from Remark \ref{rem5:1}
\end{proof}

 \subsection{Limit distribution of the partial transpose.}\label{section:limitdistrib}
 
 \begin{theorem}\label{thm:limdistrib}
 Suppose that $ \big( A_{1, N}, \dots, A_{R, N}\big)_N $ 
 is a sequence of $ R $-tuples of random matrices satisfying conditions $( \mathfrak{c}1)$ and $(\mathfrak{c}2)$ and that
 for each
 $ \nu_0, \nu_1 , \nu_2 \in \{ 1, \ast \}$
 and
 $ k, l \in [ R ] $
 there exists some real numbers
 $ m_{(\nu_0)}(k)$
 and 
 $ m_{(\nu_1, \nu_2)}(k, l) $
satisfying
 \begin{align*}
 &\lim_{N\rightarrow \infty} \E \circ \tr \big( A_{k, N}^{\nu_0} \big) = m_{(\nu_0)}(k)\\
 &\lim_{N\rightarrow \infty} \E \circ \tr \big( A_{k, N}^{\nu_1}A_{l, N}^{\nu_2} \big) = m_{(\nu_1, \nu_2)} (k, l).
 \end{align*}
 Then, for any
 	$ \vec{\nu} = 
 	( \nu_1, \dots, \nu_m) \in \{ 1, \ast\}^m $
 	and any
 	$\vec{r} =(r_1, \dots, r_m) \in [ R]^m $,
 	we have that
 	\begin{multline}\label{fc:gamma}
 	\lim_{N \rightarrow \infty } \kappa_m \big(
 	( A_{r_1, N}^{ \rtr})^{\nu_1} ,
 	\dots,
 	( A_{r_m, N}^{ \rtr})^{\nu_m} 
 	\big)  \\	
 	= 
 	\begin{cases}
 	m_{(\nu_1)}(r_1), & \textrm{ if } m =1 \\
 	m_{( \nu_1, \nu_2 )}(r_1, r_2) - m_{(\nu_1)}(r_1) m_{(\nu_2)}(r_2),
 	& \textrm{ if } m =2\\
 	0,  & \textrm{ if } m \geq  3
 	\end{cases}
 	\end{multline}
 	where 
 	$ \kappa_m\big(
 	X_1 ,
 	X_2 ,
 	\dots,
 	X_m
 	\big) $ 
 	denotes the $m^{th}$ free cumulant of 
 	the tuple
 	$\big(
 	X_1, X_2, \dots, X_m
 	\big) $ with respect to $\E \circ \tr$.
 \end{theorem}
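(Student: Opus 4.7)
The plan is to apply the expansion from Section~\ref{aux} with $\theta \equiv 1$ and $\eta \equiv -1$, which gives $T^{(\theta,\eta)} = T^{\rtr}$. Since $(A^{\rtr})^* = (A^*)^{\rtr}$, setting $T_{s,N} = A_{r_s,N}^{\nu_s}$ (a unitarily invariant $m$-tuple) we obtain
\[
\E \circ \tr\big((A_{r_1,N}^{\rtr})^{\nu_1}\cdots (A_{r_m,N}^{\rtr})^{\nu_m}\big) = \sum_{p,q\in \cP_2^\varepsilon(2m)} \mathcal{W}_{\theta,\eta}(p,q,T_{1,N},\dots,T_{m,N}).
\]
The lemma just preceding the theorem statement reduces this sum modulo $o(1)$ to those $(p,q)$ with $\mathcal{W}_{\theta,\eta}(p,q,I_{M_N}) = O(M_N^0)$. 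Under our choice $\theta \equiv 1,\ \eta \equiv -1$, Lemma~\ref{lemma:2:4} classifies these contributing pairs: $p \vee q$ is non-crossing with blocks of the three types (i), (ii), (iii). Writing
\[
\mathcal{W}_{\theta,\eta}(p,q,T_{1,N},\dots,T_{m,N}) = \mathcal{W}_{\theta,\eta}(p,q,I_{M_N})\cdot\E\circ\tr_{\widehat{q}}(T_{1,N},\dots,T_{m,N}),
\]
the bounded cumulants property (cf.\ Remark~\ref{rem5:1}) ensures that $\E\circ\tr_{\widehat{q}}$ asymptotically factors into a product of individual-trace expectations, one per cycle of $\widehat{q}$.

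I would then analyze each block type separately. A type~(i) block at position $k$ gives a fixed point of $\widehat{q}$ contributing $m_{(\nu_k)}(r_k)$; a type~(iii) pair of blocks at $\{k,t\}$ gives a $2$-cycle of $\widehat{q}$ contributing $m_{(\nu_k,\nu_t)}(r_k,r_t)$; a type~(ii) block at $\{k,t\}$ (necessarily with $q$ locally of pair form $(2k-1,2k)(2t-1,2t)$, as forced by Lemma~\ref{lemma:2:4}) gives two fixed points of $\widehat{q}$ and contributes $m_{(\nu_k)}(r_k)\,m_{(\nu_t)}(r_t)$ multiplied by the sign $-1$ coming from $C(p\vee q)$ on a cyclic block of length~$4$ (via Lemma~\ref{lemma:W:V} and equation (\ref{w-mult})). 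Grouping contributing $(p,q)$ by the induced non-crossing partition $\pi$ of $[m]$ into singletons (from type~(i) blocks) and pairs (each pair arising from either type~(ii) or type~(iii)) yields
\[
\lim_{N\to\infty}\E\circ\tr\big((A_{r_1,N}^{\rtr})^{\nu_1}\cdots(A_{r_m,N}^{\rtr})^{\nu_m}\big) = \sum_{\pi} \prod_{\{k\}\in\pi} m_{(\nu_k)}(r_k) \prod_{\{k,t\}\in\pi} \!\big[m_{(\nu_k,\nu_t)}(r_k,r_t) - m_{(\nu_k)}(r_k)\,m_{(\nu_t)}(r_t)\big],
\]
where $\pi$ runs over non-crossing partitions of $[m]$ whose blocks all have size at most~$2$.

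The right-hand side is precisely the moment-cumulant expansion $\sum_{\pi}\prod_{B\in\pi}\kappa_{|B|}$ for the family whose free cumulants are $\kappa_1(X_k)=m_{(\nu_k)}(r_k)$, $\kappa_2(X_k,X_t)=m_{(\nu_k,\nu_t)}(r_k,r_t)-m_{(\nu_k)}(r_k)\,m_{(\nu_t)}(r_t)$, and $\kappa_r \equiv 0$ for $r\ge 3$, yielding (\ref{fc:gamma}). The main technical obstacle is the combinatorial and sign bookkeeping: verifying the exact correspondence between admissible $(p,q)$ and pairs $(\pi,\text{type-assignment on pairs of }\pi)$, and correctly tracking the $-1$ factor from $C(p\vee q)$ on type~(ii) blocks. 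It is precisely this sign that turns the naive sum $m_{(\nu_k,\nu_t)}+m_{(\nu_k)}m_{(\nu_t)}$ arising from aggregating types~(ii) and~(iii) into the cumulant-style combination $m_{(\nu_k,\nu_t)}-m_{(\nu_k)}m_{(\nu_t)}$, which is what forces free cumulants of order $\ge 3$ to vanish.
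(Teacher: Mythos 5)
Your proposal is correct and follows essentially the same route as the paper's proof: restrict to admissible $(p,q)$ via Lemma \ref{lemma:2:4}, group them by the induced partition in $NC_{1,2}(m)$, and compute block-wise contributions where the Weingarten factor $M^3\,\mathrm{Wg}_M(h_1,h_1')\to -1$ on type (ii) blocks produces exactly the combination $m_{(\nu_k,\nu_t)}-m_{(\nu_k)}m_{(\nu_t)}$ before invoking the moment--cumulant recursion. The only cosmetic difference is that the paper organizes the block-by-block bookkeeping as an induction on $m$, removing one segment block of $\rho$ at a time, whereas you phrase it as a global grouping; the content is the same.
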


We shall prove Theorem \ref{thm:limdistrib} in several steps. First, we shall introduce several notations for some classes of non-crossing partitions. Then we shall derive 
(\ref{fc:gamma}) using Lemma \ref{lemma:2:4} and the moment-free cumulant recursion.

\begin{proof}
As before, we shall denote by 
$ NC(m) $
the set of non-crossing partitions on the ordered set 
$ [m]$. 
Furthermore, denote
\begin{align*}
& NC_{1,2}(m)=
\{ \pi \in NC(m) \mid \textrm{each block of $\pi $ has either one or 2 elements} \},
\\
&AP(2m) =
\{ \pi \in NC(2m) \mid \textrm{ the blocks of }
\pi  \textrm{ satisfy conditions $(i)$--$(iii)$ of Lemma \ref{lemma:2:4}} \}.
\end{align*}

Next, consider the surjective mapping 
$ \mathfrak{i}:AP(2m) \rightarrow NC_{1, 2}(m) $
given as follows. 
For 
$ \pi \in AP(2m) $,
we have that 
$ (k) $ 
is a block of 
$ \mathfrak{i}(\pi) $
whenever 
$ (2k-1, 2k)$ 
is a block of 
$ \pi $,
and that
$ (k, t)$
is a block of 
$ \mathfrak{i}(\pi) $
whenever either 
$ (2k-1, 2k, 2t-1, 2t )$ 
is a block of 
$ \pi $,
or $(2k-1, 2t )$ and $ (2k, 2t-1)$ 
are both blocks of 
$ \pi $.	

From the moment-free cumulant recursion formula (see \cite{nica-speicher}), it suffices to show that 
\begin{multline}\label{A:m}
\lim_{N \rightarrow \infty} \E\big(\tr  \big( 
( A_{r_1, N}^{ \rtr})^{\nu_1} \cdot 
( A_{r_2, N}^{ \rtr})^{\nu_2} 
\cdots
( A_{r_m, N}^{ \rtr})^{\nu_m}
\big)\\
=  \sum_{\pi\in NC_{1, 2}(m)}
\prod_{\substack{B \in \pi\\ B = (k)} } 
m_{(\nu_k)}(r_k)
\prod_{\substack{B \in \pi\\ B = (s, t)} } 
\big(m_{(\nu_s, \nu_t)}(r_s, r_t) - m_{(\nu_1)}(r_1) m_{(\nu_2)}(r_2)\big).
\end{multline}

Corollary \ref{cor:2:3} and Lemma \ref{lemma:2:4}, give that 
\begin{align*}
\lim_{N \rightarrow \infty} \E \circ  \tr  \big( 
( A_{r_1, N}^{ \rtr})^{\nu_1} \cdot
( A_{r_2, N}^{ \rtr})^{\nu_2} 
\cdots
( A_{r_m, N}^{ \rtr})^{\nu_m}
\big)
=
\lim_{N \rightarrow \infty}
\sum_{ \substack{p, q \in \cP_2^{\varepsilon}(2m) \\ 
		p \vee q \in AP(2m)} }
\mathcal{W}_{\theta, \eta}(p, q, A_{\vec{r}}^{\vec{\nu}}),
\end{align*}
where
$ \theta$ and $ \eta $
are given by
$ \theta(s) = 1 $ 
and
$ \eta(s) = -1 $
for all 
$ s\in[m] $.
Therefore, to show (\ref{A:m}), it suffices to show that, 
for each
$ \rho \in NC_{1, 2}(m) $, the following relation holds true:
\begin{align}\label{W:m}
\sum_{ \substack{p, q \in \cP_2^{\varepsilon}(2m) \\ 
		\mathfrak{i} (p \vee q )
		= \rho} }
\lim_{N \rightarrow \infty}
\mathcal{W}_{\theta, \eta}( & p, q,  A_{\vec{r}}^{\vec{\nu}})\\
= & \prod_{\substack{B \in \rho\\ B = (k)} } 
m_{(\nu_k)}(r_k)
\prod_{\substack{B \in \rho\\ B = (s, t)} } 
\big( m_{(\nu_s, \nu_t)}( r_s, r_t) - 
m_{(\nu_s)}( r_s) \cdot m_{(\nu_t)}( r_t)
\big).
\nonumber
\end{align}

We shall prove (\ref{W:m}) by induction on 
$ m $. 
Since 
$ \rho $ is non-crossing, it has at least one block which is a segment. Denote by 
$ B $ 
the block of 
$ \rho $ 
which is a segment and has the property that the least element of 
$B$ is minimal among all segments.
Suppose first that 
$ B$ 
is a singleton, i.e. 
$ B =  (k) $ 
for some 
$ k \in [ m ] $.
From the construction of the mapping 
$ \mathfrak{i} $, 
it follows that 
$ p $ and $ q $ 
are in the case $(i)$ of Lemma \ref{lemma:2:4},
so 
$ p(2k-1) = q(2k-1) = 2k $;
in particular, the set 
$ [ 2m] \setminus \{ 2k-1, 2k\} $
is invariant by
$ p, q  $. Also, remember that the map 
$ \widehat{q} \in S(m) $
from Equation (\ref{it:W}) is defined via 
$ \widehat{q}(s) = \frac{1}{2}(q(2s) +1) $,
so 
$ (k) $ 
is also a singleton cycle of $ \widehat{q} $. 
Hence, denoting by 
$ p_0, q_0 $
the restrictions of 
$ p, q $ 
to the set 
$ [ 2m] \setminus \{ 2k-1, 2k\} $
and by 
$ \vec{r}^{\mb\prime} $,
 respectively 
 $\vec{\nu}^{\mb\prime} $
the multi-indeces
$(r_1, \dots, r_{k-1}, r_{k+1}, \dots, r_m)$
and 
$(\nu_1, \dots, \nu_{k-1}, \nu_{k+1}, \dots, \nu_m )$,
we get that
\begin{align*}
\tr_{\widehat{q} } 
(A_{\vec{r}}^{\vec{\nu}})
= \tr(A_{r_k, N}^{\nu_k}) \cdot 
\tr_{\widehat{q_0} }
(A_{\vec{r}^{\mb\prime}}^{\vec{\nu}^{\mb\prime}}),
\end{align*}
which, from conditions $(\mathfrak{c}1)$ and $(\mathfrak{c}2)$, give that
\begin{align}\label{tr:sep1}
\lim_{N \rightarrow \infty}\E \big( 
\tr_{\widehat{q} } (A_{\vec{r}}^{\vec{\nu}} )
\big) 
=  m_{( \nu_k)} (r_k)
\cdot
\lim_{N \rightarrow \infty}
\E \big(
\tr_{\widehat{q_0} }
(A_{\vec{r}^{\mb\prime}}^{\vec{\nu}^{\mb\prime}} )
\big). 
\end{align}

Denote by
$ h $ 
the pairing on 
$ \{1, 2\}$ (that is $ h(1) = 2 $)
and  note that, since according to Lemma \ref{lemma:2:4}, 
$ \alpha_{2k-2} = \alpha_{2k+1} $
and
$ \beta_{2k-2} = \beta_{2k+1} $,
we have that 
\begin{align*}
\lim_{N \rightarrow \infty}
\mathcal{W}_{\theta, \eta} (p, q, I_M) & = 
\lim_{N \rightarrow \infty }
\big[
\mathcal{W}_{\theta_0, \eta_0} (p_0, q_0, I_M)
\cdot 
\textrm{Wg}_M (h, h) \cdot 
\sum_{ \substack{\xi_{2k-1}^\prime, \xi_{2k}^\prime \in [ b ] \\
		\xi_{2k-1}^{\prime\prime}, \xi_{2k}^{\prime\prime} \in [ d ]		
} }
\delta_{\xi_{2k-1}^\prime}^{\xi_{2k}^\prime} 
\delta_{\xi_{2k-1}^{\prime\prime}}^{\xi_{2k}^{\prime\prime} }
\big]\\
& = 
\lim_{N \rightarrow \infty }
\big[
\mathcal{W}_{\theta_0, \eta_0} (p_0, q_0, I_M)
\cdot 
M \cdot \textrm{Wg}_M (h, h)
\big]\\
& = 
\lim_{N \rightarrow \infty }
\mathcal{W}_{\theta_0, \eta_0} (p_0, q_0, I_M)
\end{align*}
hence, according to relation (\ref{tr:sep1}), equality (\ref{W:m}) follows by induction on 
$ m $.

Suppose now that 
$ B = (k, k+1) $
for some 
$ k \in [ m-1] $.
Denote
\begin{align*}
\cP_1(\rho) &
= \big\{ \pi \in \iota^{-1} ( \{ \rho\} ) : \  
(2k-1, 2k, 2k+1, 2k+2) \textrm{ is a block of } \pi
\big\} \\
\cP_2(\rho) &
= \big\{ 
\pi \in \iota^{-1} ( \{ \rho\} ) : \
(2k-1, 2k+2)  \textrm{ and } (2k, 2k+1) \textrm{ are blocks of } \pi
\big\}.
\end{align*}

As in the previous case, let 
$ p, q \in \cP_2^{\varepsilon}(2m) $
be such that 
$ p \vee q = \pi $.
We shall distinguish two cases.

\noindent
\fbox{1}
If 
$ \pi \in \cP_1 (\rho) $, 
then 
$ p $ and  $ q $ 
are as in case $(ii)$ of Lemma \ref{lemma:2:4}, 
so we have that
$ p(2k-1 ) =2k+2  $, 
$ p( 2k) = 2k+1$,
$ q(2k-1) = 2k $, 
$ q( 2k+1) = 2k + 2 $
and the set 
$ [ 2m] \setminus \{ 2k-1, 2k, 2k+1, 2k+2 \} $
is invariant under 
$ p $ 
and 
$ q $.
Next, remark that, by construction
$ (k) $
and
$( k +1) $
are blocks of 
$ \widehat{q} $ 
and that denoting, this time, by 
$p_0, q_0$
the restrictions of 
$ p, q $
to the set 
$ [ 2m ] \setminus \{ 2k-1, 2k, 2k+1, 2k+2 \}$
and by 
$ \vec{r}^{\mb\prime} $,
respectively
$ \vec{\nu}^{\mb\prime} $
the multi-indeces
$(r_1, \dots, r_{k-1}, r_{k+2}, \dots, r_m)$
and
$(\nu_1, \dots, \nu_{k-1}, \nu_{k+2}, \dots, \nu_m)$
we get that 
 \begin{align*}
\tr_{\widehat{q} } (A_{\vec{r}}^{\vec{\nu}} )
= \tr(A_{r_k, N}^{\nu_k}) \tr ( A_{r_{k+1}, N}^{\nu_{k+1}} ) \cdot 
\tr_{\widehat{q_0} }
(A_{\vec{r}^{\mb\prime}}^{\vec{\nu}^{\mb\prime}}),
\end{align*}
which, using conditions $(\mathfrak{c}1)$ and $(\mathfrak{c}2)$ gives
\begin{align}\label{tr:sep2}
\lim_{N \rightarrow \infty}\E \big( 
\tr_{\widehat{q} }  (A_{\vec{r}}^{\vec{\nu}})
\big) 
= m_{ ( \nu(k) )}(r_k) m_{ ( \nu(k+1))} (r_{k+1})
\cdot
\lim_{N \rightarrow \infty}
\E \big(
\tr_{\widehat{q_0} }(
A_{\vec{r}^{\mb\prime}}^{\vec{\nu}^{\mb\prime}})
\big). 
\end{align}	
On the other hand, denoting by 
$ h_1 $,
respectively
$ h_1^\prime $
the pairings on 
$ \{ 1, 2, 3, 4 \} $ 
given by 
$ h_1 (1) = 4 $, $ h_1 (2) = 3 $,
respectively 
$ h_1^\prime (1) = 2 $, $ h_1^\prime (3) = 4 $,
and using again that, according to Lemma \ref{lemma:2:4},
$\alpha_{2k-2} = \alpha_{2k+3}$ 
and 
$ \beta_{2k-2} = \beta_{2k+3}$,
we have that
 \begin{align*}
\lim_{N \rightarrow \infty}
\mathcal{W}_{\theta, \eta} (p, q, I_M) & = 
\lim_{N \rightarrow \infty }
\big[
\mathcal{W}_{\theta_0, \eta_0} (p_0, q_0, I_M)
\cdot M^3 \cdot 
\textrm{Wg}_M (h_1, h_1^\prime) \\
& = - 
\lim_{N \rightarrow \infty }
\mathcal{W}_{\theta_0, \eta_0} (p_0, q_0, I_M),
\end{align*}
 since Equation (\ref{w-mult}) gives that 
$ \displaystyle 
\lim_{N \rightarrow \infty} M^3
\cdot 
\textrm{Wg}_M (h_1, h_1^\prime)
= -1 $.
Hence, using (\ref{tr:sep2}) we obtain that
  \begin{align}\label{cum2:1}
\lim_{N \rightarrow \infty }\mathcal{W}_{\theta, \eta} (p, q, A_{\vec{r}}^{\vec{\nu}}) = 
- m_{(\nu(k) )}(r_k) m_{\nu(k+1)} (r_{k+1})
\cdot 
\lim_{N \rightarrow \infty }
\mathcal{W}_{\theta_0, \eta_0} (p_0, q_0, A_{\vec{r}^{\mb\prime}}^{\vec{\nu}^{\mb\prime}}).
\end{align}

\noindent
\fbox{2}
If 
$ \pi \in \cP_2 (\rho) $, 
then 
$ p $ and  $ q $ 
are as in case $(iii)$ of Lemma \ref{lemma:2:4}, so
$ p(2k-1) = q (2k-1) = 2k+2 $ 
and
$ p(2k) = q(2k) = 2k+1 $.
In this case, 
$ (k, k+1) $
is a block of 
$ \widehat{q} $ 
and (again
$ p_0, q_0 $ 
denote the restriction of
$ p, q $ 
to the set 
$ [ 2m] \setminus \{ 2k-1, 2k, 2k+1, 2k+2 \} $):
\begin{align}\label{tr:sep3}
\lim_{N \rightarrow \infty}\E \big( 
\tr_{\widehat{q} } (A_{\vec{r}}^{\vec{\nu}})
\big)
= m_{ ( \nu(k),\nu(k+1))} (r_k, r_{k+1})
\cdot
\lim_{N \rightarrow \infty}
\E \big(
\tr_{\widehat{q_0} }
(A_{\vec{r}{\mb\prime}}^{\vec{\nu}^{\mb\prime}})
\big). 
\end{align}	

Denoting by 
$ h_2 $
the pairing on 
$ \{ 1, 2, 3, 4\}$
given by
$ h_2 (1) = 4 $ 
and 
$ h_2(2) =3 $,
then using Lemma \ref{lemma:2:4} and Equation (\ref{w-mult}) as above, we get
\begin{align*}
\lim_{N \rightarrow \infty}
\mathcal{W}_{\theta, \eta} (p, q, I_M) & = 
\lim_{N \rightarrow \infty }
\big[
\mathcal{W}_{\theta_0, \eta_0} (p_0, q_0, I_M)
\cdot M^2 \cdot 
\textrm{Wg}_M (h_2, h_2) \big] \\
& = 
\lim_{N \rightarrow \infty }
\mathcal{W}_{\theta_0, \eta_0} (p_0, q_0, I_M),
\end{align*}
therefore
\begin{align}\label{cum2:2}
\mathcal{W}_{\theta, \eta} (p, q, A_{\vec{r}}^{\vec{\nu}}) = 
m_{(\nu(k), \nu(k+1))} (r_k, r_{k+1})
\cdot 
\lim_{N \rightarrow \infty }
\mathcal{W}_{\theta_0, \eta_0} (p_0, q_0, A_{\vec{r}^{\mb\prime}}^{\vec{\nu}{\mb\prime}}).
\end{align}

Denoting by 
$\rho_0 $ 
the restriction of 
$ \rho $
to 
$ [2m] \setminus \{ 2k-1, 2k, 2k+1, 2k+2 \}$,
Equations (\ref{cum2:1}) and (\ref{cum2:2}) give
 \begin{align}
\sum_{ \substack{p, q \in \cP_2^{\varepsilon}(2m) \\ 
		\mathfrak{i} (p \vee q )
		= \rho} }
\lim_{N \rightarrow \infty}
\mathcal{W}_{\theta, \eta}(p, q, A_{\vec{r}}^{\vec{\nu}})
= 
\big[
m_{(\nu(k), \nu(k+1) )}&(r_k, r_{k+1}) -  m_{(\nu(k) )}(r_k) m_{\nu(k+1)}(r_{k+1})
 \big] \cdot \\
&
\sum_{ \substack{p_0, q_0 \in \cP_2^{\varepsilon}(2m-4) \\ 
		\mathfrak{i} (p_0 \vee q_0 )
		= \rho_0} }
\lim_{N \rightarrow \infty}
\mathcal{W}_{\theta, \eta}(p_0, q_0, A_{\vec{r}^{\mb\prime}}^{\vec{\nu}^{\mb\prime}})\nonumber
\end{align}
and the conclusion follows again by induction on 
$ m $.
\end{proof}

\begin{remark}
	Since Wishart ensembles of random matrices have the bounded cumulant property (see \cite{wishart2}), the results from \cite{aubrun} are a particular case of Theorem \ref{thm:limdistrib}.
\end{remark}

\begin{remark}
	If 
	$  \big(A_N \big)_N $
	is an ensemble of self-adjoint random matrices that satisfies the hypothesis of Theorem \ref{thm:limdistrib}, then the limit distribution of  
	$ A_N^\rtr $
	is a translated semicircular distribution.
	If the ensemble
	$ \big( A_N \big)_N $ 
	is asymptotically $R$    -diagonal, then
	$ A_N^\rtr $ 
	is asymptotically circular.
\end{remark}

\begin{remark}
	Theorem \ref{thm:limdistrib} allows us an easy formulation of necessary and sufficient conditions for 
	$ A_N^{\rtr} $ 
	to have a positive limit distribution (as analyzed in \cite{aubrun} for Wishart ensembles). More precisely, we need 
	\[ \left\{ 	\begin{array}{l}
	m_{(1)}= m_{(\ast)},  \ \ m_{(1, 1)} = m_{(1, \ast)} =m_{(\ast, \ast)} \\
	\\
	m_1^2 \leq 4 ( m_{(1, 1)} - m_{(1)}^2 ).
	\end{array}
	\right.\]
\end{remark}

\subsection{Asymptotic relations of free independence}

For the proof of the main result of this subsection, Theorem \ref{thm:free} below, we need to introduce a new notation in the spirit of Lectures 9 and 18 from \cite{nica-speicher}.

Let
$ S = \{ a(1), a(2), \dots, a(r) \}$
be a subset of 
$ [ m] $,
with
$ a(1) < \dots < a(r) $. 
If 
$ \rho $ 
is a non-crossing partition on the set 
$ S $,
we shall denote by
$ C_m ( \rho )  $
the largest (in the partial order given by inclusion of blocks) non-crossing partition on
$ [ m]\setminus S $
such that the partition consisting in the union of blocks of 
$ \rho $
and 
$ C_m(\rho)$
is non-crossing on 
$ [ m ]$.
If 
$ X_1, \dots, X_m $
are elements in some non-commutative probability space 
$ (\mathfrak{A}, \Phi)$,
then we denote
 \begin{align}\label{k:C}
\kappa_\rho \Phi_{C_m(\rho)}
\big[ X_1, & X_2, \dots, X_m \big]\\
= &
\prod_{ \substack{B \in \rho \\ B = \{ b(1), \dots, b(t)\}}} 
\kappa_t (X_{b(1)}, \dots, X_{b(t)})
\cdot
\prod_{ \substack{D \in C_m (\rho) \\ D = \{ d(1), \dots, d(q)\}}} 
\Phi (X_{d(1)} \cdots X_{d(q)} ).\nonumber
\end{align}

\begin{theorem}\label{thm:free}
 Suppose that $ \big( A_{1, \sN}, \dots, A_{\sR, \sN}\big)_N $ 
is a sequence of $ R $-tuples of random matrices satisfying conditions $( \mathfrak{c}1)$ and $(\mathfrak{c}2)$ 
and with converging joint $\ast$-moments, i.e.
$\displaystyle
\lim_{N \rightarrow \infty} E \circ \tr \big( 
A_{r_1, \sN}^{\nu_1} A_{r_2, \sN}^{\nu_2} \cdots A_{r_m, \sN}^{\nu_m}
\big)
$
exits and it is finite for any positive integer $m $,
any $\nu_1, \dots, \nu_m \in\{ 1, \ast\}$ 
and any $r_1, \dots, r_m \in [ R ] $.

Then the four ensembles 
$(A_{1, \sN}, \dots, A_{\sR, \sN})_N $, 
$ (A_{1, \sN}^\top, \dots, A_{\sR, \sN}^\top)_N $,
$ (A_{1, \sN}^\rtr, \dots, A_{\sR, \sN}^{\rtr})_N $
and 
$(A_{1, \sN}^{\ltr}, \dots, A_{\sR, \sN}^{\ltr})_N $
are asymptotically $\ast$-free from each other.
\end{theorem}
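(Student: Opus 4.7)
The plan is to establish asymptotic freeness by showing that the limiting joint $\ast$-moments of arbitrary products of elements from the four ensembles factor according to the free product formula recalled in~(\ref{k:C}). Fix indices $r_1,\dots,r_m\in[R]$, exponents $\nu_1,\dots,\nu_m\in\{1,\ast\}$, and ``family labels'' $(\theta_k,\eta_k)\in\{\pm 1\}^2$ recording which of $A$, $A^\top$, $A^\rtr$, $A^\ltr$ is applied to $A_{r_k,N}$ (see (\ref{notation:2-1})). Using unitary invariance of the $R$-tuple to introduce an independent Haar unitary, the machinery of Section~\ref{aux} gives
\begin{equation*}
\E\circ\tr\bigl((A_{r_1,N})^{(\theta_1,\eta_1),\nu_1}\cdots (A_{r_m,N})^{(\theta_m,\eta_m),\nu_m}\bigr) = \sum_{p,q\in\cP_2^{\varepsilon}(2m)} \mathcal{W}_{\theta,\eta}(p,q,A_{\vec r}^{\vec\nu}).
\end{equation*}
By Corollary~\ref{cor:2:3}(ii) the pairs $(p,q)$ that contribute to leading order satisfy: $\theta$ and $\eta$ are constant on every block of $p\vee q$. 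This is precisely the combinatorial mechanism behind asymptotic freeness, since each block of $p\vee q$ ``sees'' only one of the four families.

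The next step is to extend Lemma~\ref{lemma:2:4} uniformly to all four family labels. Lemma~\ref{lemma:2:4} already handles $(\theta,\eta)=(1,-1)$; the case $(1,1)$ is the purely unitarily invariant computation from \cite{mingo-popa-transpose}; the case $(-1,-1)$ (full transpose) reduces to it by a reindexing; and the case $(-1,1)$ follows from the identity $A^\ltr=(A^\rtr)^\top$. In each case one determines the admissible block shapes of $p\vee q$ and the identifications forced among the $\alpha,\beta$ indices at the boundary of the block. With these four local descriptions in hand, one builds an analogue of the map $\mathfrak i:AP(2m)\to NC_{1,2}(m)$ sending $p\vee q$ to a non-crossing partition $\sigma$ of $[m]$ whose blocks are each contained in one of the sets $S_{(\theta,\eta)}:=\{k:(\theta_k,\eta_k)=(\theta,\eta)\}$.

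With this combinatorial machinery in place the argument of Theorem~\ref{thm:limdistrib} can be run in parallel: I would induct on $m$, pick an innermost block $B$ of $\sigma$, peel it off using the structural lemma to separate out the trace of the corresponding sub-product, and use (\ref{w-mult}) to collect the Weingarten coefficient. Summing over all $(p,q)$ that project to a fixed $\sigma$ produces, for each block $B\subset S_{(\theta,\eta)}$, exactly the limit free cumulant of the corresponding family given by Theorem~\ref{thm:limdistrib} (together with its obvious analogues for the full transpose and for $\ltr$). Summing finally over all $(\theta,\eta)$-monochromatic $\sigma$ and regrouping matches the moment-cumulant formula (\ref{k:C}) applied to the four families, which is precisely the statement of asymptotic freeness.

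The main obstacle will be the uniform version of Lemma~\ref{lemma:2:4} covering all four family labels at once. Although each label can be treated individually by arguments already in the paper, packaging the four cases into a single description of the map from admissible $(p,q)$ to the non-crossing partition $\sigma$ on $[m]$, and keeping careful track of the boundary identifications of the $\alpha$ and $\beta$ indices needed to make the induction close, is the delicate part. Once that is done, the identification of the resulting sum with~(\ref{k:C}) is essentially bookkeeping, since Theorem~\ref{thm:limdistrib} already furnishes the free cumulants of each family.
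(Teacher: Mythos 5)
Your overall strategy (Weingarten expansion, Corollary \ref{cor:2:3} forcing $\theta,\eta$ to be constant on blocks of $p\vee q$, then an induction peeling off innermost blocks) matches the paper's in spirit, but the step you yourself flag as the main obstacle is a genuine gap, not a packaging issue, and the paper's proof is built precisely to avoid it. Your plan requires, for each of the four labels, an analogue of Lemma \ref{lemma:2:4} restricting blocks of $p\vee q$ to small types together with the claim that summing $\mathcal{W}_{\theta,\eta}(p,q,\cdot)$ over all $(p,q)$ projecting to a fixed monochromatic $\sigma$ yields a product of limit free cumulants of the corresponding family. For the labels $(1,1)$ and $(-1,-1)$ no such restriction to blocks of size $1$, $2$, or $4$ holds: the untransposed family has a general limiting $\ast$-distribution with nonvanishing cumulants of all orders, the blocks of $p\vee q$ at those positions can be arbitrarily large, and they do not correspond to free-cumulant blocks at all (the factors $\E\circ\Tr_{\widehat q}$ carry their own cumulant expansions). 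Turning those Weingarten sums into free cumulants of the limit distribution is essentially the main theorem of \cite{mingo-popa-transpose}, not bookkeeping. Your reduction of the $(-1,1)$ case via $A^{\ltr}=(A^{\rtr})^{\top}$ is also not immediate inside a mixed word, since transposition reverses the order of a product; the correct local symmetry is the $b\leftrightarrow d$ (i.e.\ $\theta\leftrightarrow\eta$) symmetry of Lemma \ref{lemma:W:V}.

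The paper circumvents all of this with a reduction you do not make: since $A^{\ltr}=(A^{\top})^{\rtr}$ and the freeness of $(A_{k,\sN})$ from $(A_{k,\sN}^{\top})$ is already known from \cite{mingo-popa-transpose}, it suffices to prove that the $\rtr$-ensemble is asymptotically free from the ensemble generated by $A$, $A^{\top}$ and $A^{\ltr}$ jointly. The target identity (\ref{k-E-tr}) is then stated with free cumulants only over $NC_{1,2}(S)$, where $S$ is the set of positions carrying the label $(1,-1)$, while all remaining positions are grouped by $C_m(\rho)$ and left as \emph{moments} $(\E\circ\tr)_{C_m(\rho)}$. Consequently Lemma \ref{lemma:2:4} is only ever applied to the $\rtr$ positions, where Theorem \ref{thm:limdistrib} supplies the cumulants, and no structural analysis of the blocks at the other positions is needed. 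To repair your argument you should either adopt this reduction or supply the full identification of the Weingarten sums at the $(1,1)$ and $(-1,-1)$ positions with free cumulants of the limiting joint distribution of $(A,A^{\top})$, which is a substantial additional piece of work.
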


\begin{proof}
	The asymptotic $\ast$-free independence of 
$(A_{1, \sN}, \dots, A_{\sR, \sN})_N $ 
and
$(A_{1, \sN}^\top, \dots, A_{\sR, \sN}^\top)_N $ 
 was shown in \cite{mingo-popa-transpose}.  Since
$ A^{\ltr} =(A^\top)^\rtr$,
 it suffices to show that
 $ (A_{1, \sN}^\rtr, \dots, A_{\sR, \sN}^\rtr)_N $
is asymptotically free from the ensemble
$(A_{1,\sN}, A_{1, \sN}^\top, A_{1, \sN}^{\ltr}, \dots, A_{\sR,\sN}, A_{\sR, \sN}^\top, A_{\sR, \sN}^{\ltr})_N $.

We shall prove this last statement using the free cumulant-moment recursion formula, as detailed below.
  Let
$ m $ 
be a positive integer,  let
$ r:[m] \rightarrow [R]$,
$ \nu : [ m] \rightarrow \{ 1, \ast\} $
and let 
$ \theta, \eta : [ m ] \rightarrow \{ -1, 1\} $.

We shall use the notation (\ref{k:C}) for 
$\Phi= \E \circ \tr $
and
$  S = \{ a(1) , a(2), \dots, a(r) \}$
with
$ a(1) < a(2) < \dots a(r) $
be the set of all
$ s \in [ m ] $ 
such that 
$ \theta(s) =1 $
and 
$ \eta(s) = -1 $.
Since, according to Theorem \ref{thm:limdistrib}, all free cumulants of order higher than two in 
$ A_{k, \sN}^\rtr $ 
and 
$ (A_{k, \sN}^\rtr)^\ast $
vanish as 
$ N \rightarrow \infty $,
it suffices to show that
\begin{align}\label{k-E-tr}
\lim_{N \rightarrow \infty} 
\E \circ & \tr 
\big( 
\prod_{ s = 1 }^m ( A_{r_s, \sN}^{( \theta(s), \eta(s))}  )^{\nu(s)} 
\big)\\
& = \lim_{N \rightarrow \infty}
\sum_{ \rho \in NC_{1, 2}(S) } \kappa_{\rho} \cdot
(\E \circ \tr)_{ C_m(\rho)} 
\big[
 (A_{r_1, \sN}^{( \theta(1), \eta(1))})^{\nu(s)}
,
\dots,
 (A_{r_m, \sN}^{( \theta(m), \eta(m))})^{\nu(m)}
\big].\nonumber
\end{align}

On the other hand, with the notations from Section \ref{aux}, we have that
  \begin{align*}
\E \circ \tr
\big( 
\prod_{ s = 1 }^m ( A_{r_s, \sN}^{( \theta(s), \eta(s))}  )^{\nu(s)} 
\big)
=
\sum_{p, q \in \cP_2^{\varepsilon}(2m) } 
\mathcal{W}_{\theta, \eta} (p, q, A_{\vec{r}}^{\vec{\nu}}).
\end{align*}

According to Corollary \ref{cor:2:3}, 
$ \displaystyle\lim_{N \rightarrow \infty} \mathcal{W}_{\theta, \eta} (p, q, A_{\vec{r}}^{\vec{\nu}}) = 0$ 
unless
$\theta $ and $ \eta $ 
are constant on the blocks of 
$ p \vee q $, 
i.e. unless the set 
$\widetilde{S} = \{ 2s-1, 2s:\ s \in S \} $ 
is invariant under 
$ p $ and $ q $, 
i.e.unless the blocks of 
$ p_{| \widetilde{S}} \vee q_{| \widetilde{S}}$
satisfy the conditions from Lemma \ref{lemma:2:4}.
Therefore, with the notations from the proof of Theorem \ref{thm:limdistrib}, we have that
\begin{align}\label{concl2:0}
\lim_{N \rightarrow \infty}
\E \circ \tr \big( 
\prod_{ s = 1 }^m ( A_{r_s, \sN}^{( \theta(s), \eta(s))})^{\nu(s)}
\big) 
= 
\lim_{N \rightarrow \infty}
\sum_{ \rho \in NC_{1, 2}(S)}
\sum_{\substack{p, q \in \cP_2^{\varepsilon}(2m)\\
		p_{|\widetilde{S}} \vee q_{ | \widetilde{S}} = \rho}}
\mathcal{W}_{\theta, \eta} (p, q, A_{\vec{r}}^{\vec{\nu}}).
\end{align}

 Denote the first block of 
$ \rho $
which is a segment by
$ B(\rho)$.
Again as in the proof of Theorem \ref{thm:limdistrib}, we shall distinguish two cases: if 
$ B(\rho)$
is a singleton or if it is a pair.

If 
$ B(\rho) $
is a singleton, i.e. 
$ B(\rho) = (k) $ 
for some
$ k = a(s) $ 
with 
$ s \in [r] $,
then as shown in the proof of Theorem \ref{thm:limdistrib}, we have that
\begin{align*}
\lim_{N \rightarrow \infty}
\mathcal{W}_{\theta, \eta} (p, q, A_{\vec{r}}^{\vec{\nu}})
= 
m_{(\nu(k))} (r_k)\cdot
\lim_{N \rightarrow \infty}
\mathcal{W}_{\theta_0, \eta_0} (p_0, q_0, A_{\vec{r}^{\mb\prime}}) 
\end{align*}
where, as before
$ \theta_0, \eta_0 $
are the restrictions of 
$ \theta, \eta$
to
$ [m ] \setminus \{k\} $,
$\vec{r}^{\mb\prime}$, 
respectively
$\vec{\nu}^{\mb\prime}$
are the multi-indices
$(r_{1}, \dots, r_{k-1}, r_{k+1}, \dots, r_{m})$,
respectively
$(\nu(1), \dots, \nu(k-1), \nu(k+1), \dots,\ab \nu(m))$,
and
$ p_0, q_0 $
are the restrictions of
$ p, q $
to 
$ [2m] \setminus \{ 2k-1, 2k \}$.
Since there are no constraints on 
$ \rho_{ | S \setminus \{ k \} } $
other than being an element of 
$ NC_{1, 2}( S \setminus \{ k \} )$,
we obtain that 
\begin{align}\label{concl2:1}
\lim_{N \rightarrow \infty}
\sum_{\substack{ \rho \in NC_{1, 2,}(S) \\ B(\rho) = (k) }}
\sum_{\substack{p, q \in \cP_2^{\varepsilon}(2m)\\
		p_{|\widetilde{S}} \vee q_{ | \widetilde{S}} = \rho}}
\mathcal{W}_{\theta, \eta} &(p, q, A_{\vec{r}}^{\vec{\nu}})\\
=
& m_{(\nu(k))}(r_k) \cdot
\lim_{N \rightarrow \infty}
\E \circ \tr 
\big( 
\prod_{ \substack{1 \leq s \leq m \\
		s \neq k  } } 
(A^{( \theta(s), \eta(s))})^{\nu(s)}
\big).\nonumber
\end{align}

If 
$ B( \rho)$
is a pair, that is 
$ B(\rho) = (k, t) $
with 
$ k = a(s) $
and
$ t = a(s+1) $
for some 
$ s \in [ r-1] $,
then 
$p $ and $ q $ are as in the cases $(ii)$ and $(iii)$ of Lemma \ref{lemma:2:4}.
 In particular,
$ \alpha_{2k-2} = \alpha_{2t+1} $,
$ \beta_{2k-2} = \beta_{2t+1} $
and
$ \alpha_{2k+1}= \alpha_{2t-2}$,
$ \beta_{2k+1} = \beta_{2t-2}$,
hence the argument from the proof of Theorem \ref{thm:limdistrib} gives that 

\begin{align}\label{concl2:2}
\lim_{N \rightarrow \infty} 
\sum_{\substack{ \rho \in NC_{1, 2,}(S) \\ B(\rho) = (k) }} &
\sum_{\substack{p, q \in \cP_2^{\varepsilon}(2m)\\
		p_{|\widetilde{S}} \vee q_{ | \widetilde{S}} = \rho}} 
\mathcal{W}_{\theta, \eta} (p, q, A_{\vec{r}}^{\vec{\nu}})
  =  
m_{(\nu(k), \nu(t))} (r_k, r_t)\cdot \\
& \cdot \lim_{N \rightarrow \infty}
\E \circ \tr 
\big( 
\prod_{ k < s < t } 
 (A^{( \theta(s), \eta(s)})^{\nu(s)}
\big)
\cdot
\E \circ \tr 
\big( 
\prod_{ \substack{1< s < m\\
		s \notin [k, t]  } } 
 (A^{( \theta(s), \eta(s))})^{\nu(s)}
\big).\nonumber
\end{align}

Finally, Equation (\ref{k-E-tr}), and thus the conclusion, follows from
(\ref{concl2:0}), (\ref{concl2:1}), (\ref{concl2:2}) by induction on $m $.
\end{proof}
\begin{corollary}
Suppose that $(A_N)_N $ and $(B_N)_N $ are two sequences of random matrices such that each satisfies condition $(\mathfrak{c}1)$ and $(\mathfrak{c}2)$ and has converging $\ast$-moments. If the entries of  $ (A_{\sN} )_N $ and $ (B_{\sN})_N $ are independent, then the family 
$\{ A_{\sN}, B_{\sN}, A_{\sN}^\rtr, B_{\sN}^\rtr\}_N $ is asymptotically free.

In particular, if $ (A_{\sN})_N $ and $(B_{\sN})_N $ are two independent Wishart ensembles, then 
$ (A_{\sN}^\rtr)_{\sN} $ and $(B_{\sN}^\rtr)_{\sN}$ are asymptotically free and semicircularly distributed.
\end{corollary}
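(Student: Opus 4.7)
The plan is to reduce the corollary to Theorems \ref{thm:limdistrib} and \ref{thm:free} applied to the combined $2$-tuple $(A_{\sN}, B_{\sN})_N$. First I would verify that this $2$-tuple satisfies conditions $(\mathfrak{c}1)$ and $(\mathfrak{c}2)$ with converging joint $\ast$-moments. Joint unitary invariance is immediate from independence together with each individual ensemble being unitarily invariant: for any fixed $U$, the pairs $(UA_{\sN}U^\ast, UB_{\sN}U^\ast)$ and $(A_{\sN}, B_{\sN})$ have matching marginals and the same independence structure. Bounded cumulants transfer to the combined tuple by expanding any joint trace $\Tr\bigl(p(A_{\sN}, A_{\sN}^\ast, B_{\sN}, B_{\sN}^\ast)\bigr)$ through independence of entries: the relevant classical cumulants decompose into sums of products of cumulants of pure $A$-traces and cumulants of pure $B$-traces, each bounded by the individual hypotheses. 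Convergence of joint $\ast$-moments likewise follows by factoring mixed moments through independence.

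With the framework in place, Theorem \ref{thm:free} applied to $(A_{\sN}, B_{\sN})_N$ yields asymptotic freeness between the ensembles $(A_{\sN}, B_{\sN})$ and $(A_{\sN}^\rtr, B_{\sN}^\rtr)$ (the other two transpose families are not needed here). It remains to establish freeness \emph{within} each pair. The asymptotic freeness of $A_{\sN}$ and $B_{\sN}$ is Voiculescu's classical theorem on independent unitarily invariant ensembles. For $A_{\sN}^\rtr$ and $B_{\sN}^\rtr$, Theorem \ref{thm:limdistrib} already shows that all free cumulants of order $\geq 3$ vanish asymptotically, so only the mixed second-order free cumulants require checking. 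By formula (\ref{fc:gamma}), evaluated at $r_1 = A$, $r_2 = B$, these cumulants equal $m_{(\nu_1,\nu_2)}(A, B) - m_{(\nu_1)}(A)\, m_{(\nu_2)}(B)$. A direct computation using independence of entries and unitary invariance (which forces $\E[A_{\sN,ij}^{\nu_1}] = \delta_{ij}\, \E\circ\tr(A_{\sN}^{\nu_1})$, and similarly for $B$) shows that $\lim_N \E\circ\tr(A_{\sN}^{\nu_1} B_{\sN}^{\nu_2}) = m_{(\nu_1)}(A)\, m_{(\nu_2)}(B)$, so the mixed cumulant vanishes as required.

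Combining these three ingredients via the standard composition fact --- if $\mathcal{A}_1$ and $\mathcal{B}_1$ are free, $\mathcal{A}_2$ and $\mathcal{B}_2$ are free, and $(\mathcal{A}_1, \mathcal{B}_1)$ is free from $(\mathcal{A}_2, \mathcal{B}_2)$, then $\mathcal{A}_1, \mathcal{B}_1, \mathcal{A}_2, \mathcal{B}_2$ are mutually free --- yields the asymptotic freeness of $\{A_{\sN}, B_{\sN}, A_{\sN}^\rtr, B_{\sN}^\rtr\}$. For the ``in particular'' assertion, Wishart ensembles are self-adjoint, unitarily invariant and have the bounded cumulants property (see \cite{wishart2}); the conclusion follows by combining the first part of the corollary with the remark following Theorem \ref{thm:limdistrib}, which gives a translated semicircular limit for partial transposes of self-adjoint ensembles. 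The main technical obstacle I anticipate is the verification that bounded cumulants transfer to the combined $2$-tuple; this requires a careful expansion using independence to reduce mixed cumulants to products of cumulants of pure-$A$ and pure-$B$ traces.
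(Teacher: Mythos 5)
Your proposal is correct and follows essentially the same route as the paper: pass to the unitarily invariant pair $(A_{\sN},B_{\sN})_N$, apply Theorem \ref{thm:free} to get freeness between the pair and its partial transposes, and then kill the mixed cumulants of $A_{\sN}^\rtr, B_{\sN}^\rtr$ via Theorem \ref{thm:limdistrib} (orders $\geq 3$) together with $\lim_N\kappa_2(A_{\sN}^\rtr,B_{\sN}^\rtr)=\lim_N\kappa_2(A_{\sN},B_{\sN})=0$ from independence. You are somewhat more careful than the paper in verifying that conditions $(\mathfrak{c}1)$--$(\mathfrak{c}2)$ and joint moment convergence transfer to the combined tuple, but this is added detail rather than a different argument.
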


\begin{proof}
 Since $(A_{\sN})_N $ and $( B_{\sN})_N $ have independent entries, $(A_{\sN}, B_{\sN})_N $ is a sequence of unitarily invariant pairs. We wish to apply Theorem \ref{thm:free} which requires that $(A_{\sN}, B_{\sN})_N $ has the bounded cumulants property of $(\mathfrak{c}2)$\,$(ii)$, but we only have this separately for $(A_{\sN})$ and $(B_\sN)$. The same problem arose in the proof of Theorem 3.15 of \cite{mss} and we shall use the same reasoning here, with the vanishing cumulants property of \cite{mss} replaced by the bounded cumulants property of $(\mathfrak{c}2)$\,$(ii)$.  First we replace $B_\sN$ by $U_\sN B_\sN U^*_\sN$, which we can do because $B_\sN$ is assumed to be unitarily invariant. When we consider classical cumulants of traces of words in $A_\sN$ and  $U_\sN B_\sN U^*_\sN$, the Weingarten calculus then restricts consideration to words that consist entirely of $A_\sN$'s or entirely of $B_\sN$'s. Third, we can now apply the bounded cumulants property separately to traces of words in $A_\sN$ and traces of words in $B_\sN$'s to conclude that $A_\sN$ and $B_\sN$ have jointly the bounded cumulants property. 
 
 Now Theorem \ref{thm:free} gives that 
 $ ( A_{\sN}, B_{\sN})_N $ and $(A_{\sN}^\rtr, B_\sN^\rtr)_N $ 
 are asymptotically free from each other, hence it suffices to show that 
 $(A_{\sN}^\rtr)_N $ 
 and 
 $(B_{\sN}^\rtr)_N $ 
 are asymptotically free, i.e. that all their mixed free cumulants vanish. But, according to Theorem \ref{thm:limdistrib}, all the free cumulants of order higher than two vanish, while
 \begin{align*}
 \lim_{N \rightarrow \infty}
 \kappa_2(A_{\sN}^\rtr, B_{\sN}^\rtr) 
 = \lim_{N \rightarrow \infty} 
 \kappa_2(A_{\sN}, B_{\sN}) 
 \end{align*}
 and the conclusion follows.
\end{proof}

 %%%%%%%%%%%%%%%%%%%%%%%%%%%%%%%%%%%%%%%%%%%%%%%%%%%%%%%%%

% % % % % % % % % % % % % % % % % % % % % % % %

%%%%%%%%%%%%%%%%%%%%%%%%%%%%%%%%%%%%%%%

%%%%%%%%%%%%%%%%%%%%%%%%%%%%%%%%%%%%%%%%%%%%%%%%%%%%%%%%%%%%%%%%%

% % % % % % % % % % % % % % % % % % % % % % % % % % % % % % % % %

% % % % % % % % % % % % % % % % % % % % % % % % % % % %

\bibliographystyle{alpha}

\end{document}